\pgfplotsset{compat=1.15} 
\newtheorem{theorem}{Theorem}[section]
\newtheorem{definition}[theorem]{Definition}
\newtheorem{remark}[theorem]{Remark}
\newtheorem{proposition}[theorem]{Proposition}
\newtheorem{corollary}[theorem]{Corollary}
\newtheorem{example}[theorem]{Example}
\def\ND{\succsim\kern-11pt/\kern5pt}
\newenvironment{proof}
{\begin{trivlist}\item[]{Proof:}}{\hfill{$\square$}\noindent\end{trivlist}}
\tikzset{every scope/.style={>=angle 60,thick}}
\newcommand{\Conv}{%
  \mathop{\scalebox{3}{\raisebox{-0.2ex}{$*$}}
  }
}
\newcommand{\SG}{\operatorname{SG}}
\newcommand{\SGs}{\widehat{\operatorname{SG}}}
\newcommand{\N}{\mathbb{N}}
\newcommand{\R}{\mathbb{R}}
\title{Simple games with minimum}
\author{Sascha Kurz$^1$ and Dani Samaniego$^2$\\ \footnotesize $^1$University of Bayreuth, \footnotesize sascha.kurz@uni-bayreuth.de
\vspace{-0.2cm}
\\  \footnotesize $^2$Universitat Politècnica de Catalunya, 
\footnotesize daniel.samaniego.vidal@upc.edu}
\date{ }
\begin{document}
\maketitle

\begin{abstract}
Every simple game is a monotone Boolean function. For the other direction we just have to exclude the two constant functions. The enumeration of monotone Boolean functions with 
distinguishable variables is also known as the Dedekind's problem. The corresponding number for nine variables was determined just recently by two disjoint research groups (\cite{JA23,Hi24}). Considering permutations of the variables as symmetries we can also speak about non-equivalent monotone Boolean functions (or simple games). For the recent determination of the number of non-equivalent monotone Boolean functions with nine variables and a summary of the previous history on the problem, we refer to \cite{paw24}. Here we consider simple games with minimum, i.e., simple games with a unique minimal winning vector. A closed formula for the number of such games is found as well as its dimension in terms of the number of players and equivalence classes of players. 
\end{abstract}

\section{Introduction}

Dedekind's problem is an open problem since 1897 (\cite{de1897}). This problem consists on finding a closed formula, in terms of the number of players, for the monotone Boolean functions with distinguishable variables. Several partial results have been obtained since then. On the one hand, the more computational approach searches the biggest known number of the sequence at hand. For example, the $9$th number of the Dedekind sequence, that is, without removing equivalent monotone functions, was determined recently and independently by Jäkel \cite{JA23} and Van Hirtum et. al. \cite{Hi24}. This term was added to the known terms of the sequence in \href{https://oeis.org/A000372}{A000372} at the On-Line Encyclopedia of Integer Sequences (OEIS). For the recent determination of the number of non-equivalent monotone Boolean functions with nine variables and a summary of the previous history on the problem, we refer to \cite{paw24}. Analogously, this late term is currently the last known of the sequence \href{https://oeis.org/A000372}{A003182} that contains the known non-equivalent monotone Boolean functions.

On the other hand, closed formulas have been found for some subsets of monotone Boolean functions. This theoretical approach has the advantage of helping to understand the structure of monotone Boolean functions and simple games. This paper focuses on this second approach (see \cite{CaFr96,FrKu13,FrMoRo2012,FrSa21DAM,KuSa23DAM,KuTau2013}  for other examples of this approach related to simple games).

Concretely, in this paper, we focus on simple games with a unique minimal winning vector. When focusing on this subset of the simple games (or monotone Boolean functions), we are able to find a closed formula in terms of the number of players for the non-isomorphic games of this kind. To achieve this goal, we translate the structure of the simple games parameterizing them in terms of a matrix and a vector. We want to preserve the isomorphism of games, so we define what we call a proper representation, in a way that two isomorphic games lie in the same proper representation. In doing so, the situation becomes more manageable to enumerate.

The concept of parametrization is widely used in the literature to understand the structure of simple games. For example, in \cite{CaFr96}, non-isomorphic complete simple games were parameterized in terms of a matrix and a vector that satisfies some constraints. This result allowed to enumerate complete simple games with two equivalence classes in \cite{FrMoRo2012} and later in \cite{KuTau2013}. The results in these two articles were revisited in \cite{FrKu13} where null and veto players were added, achieving up to four different types of players and finding also enumeration formulas. Similarly, in \cite{FrSa21DAM} a parametrization for non-isomorphic incomplete bipartite simple games was defined, and a closed formula for the size of some subclasses of these functions was found in the same article, concretely the one with minimum and maximum possible values of the amount of minimal winning vectors. Finally, in \cite{KuSa23DAM} a parameterization for non-isomorphic simple games was found and the remaining cases of the number of bipartite simple games left in \cite{FrSa21DAM} were enumerated. In this paper, we reuse the parameterization done in Theorem $1$ of \cite{KuSa23DAM} to compute the number of non-isomorphic $t$-partite simple games with one minimal winning vector. The obtained enumeration does not appear, at this time, in the OEIS.  

Furthermore, we compute the dimension of this class of simple games, that is, the minimum number of weighted games such that its intersection recovers the original simple game. The concept of weightedness is widely studied in the literature. See, for example, \cite{TaZw1992} for a characterization of weighted games in terms of trades, \cite{FrMo2019} for results related to minimum integer representations, and \cite{FrKu2014} for some enumerations about this class. Note that the concept of weightedness can be rephrased in the context where players can vote among ordered inputs and anonymity is required (see \cite{FrPo21}). Moreover, see \cite{FrPu08} for the computation of the dimension of a subclass of complete simple games. In that article the matrix parametrization does not correspond to the same done in this paper since the one done there uses the total desirability order from completeness property. Also, see \cite{Fr04,KuNa16} for examples of dimension of a simple game related to European Council voting rules.

The structure of the paper is as follows. Section \ref{S:preliminaries} for basic concepts and notation. Section \ref{S:Enum} contains the main results of the paper, that is, the enumeration formulas. Section \ref{S:dim} contains the results on the dimensions of the games studied in previous sections, and Section \ref{S:conclusion} concludes.

\section{Preliminaires}\label{S:preliminaries}

 Given a set of players (or voters) $N=\{1,2,3,\dots,n\}$, denote any subset of players a \textit{coalition}, $S\subseteq N$, and denote $2^N=\{S \ | \ S\subseteq N\}$ as the set of all coalitions. We define a simple game as a pair $(N,v)$ where 
$v \colon 2^N \to \{0,1\}$, satisfying (i) $v(\emptyset)=0$, (ii) $v(N)=1$ and (iii) for any coalitions $S,T$ such that $S\subseteq T$ we have $v(S)\leq v(T)$  (\textit{monotonicity}).

We say that $\mathcal{W}=\{S \ | \ v(S)=1\}$ is the set of \textit{winning coalitions} and $\mathcal{L}=\{S \ | \ v(S)=0\}$ is the set of \textit{losing coalitions}. As usual, a \emph{minimal winning} coalition is a winning coalition all of whose proper subsets are losing, it is $\mathcal{W}^m=\{W\in\mathcal{W} \ | \ v(T)=0 \text{ for any } T\subsetneq S\}$. Similarly we define the set of maximal losing coalitions $\mathcal{L}^M=\{S\in\mathcal{L} \ | \ v(T)=1 \text{ for any } S\subsetneq  T\}$.  Any simple game $(N,v)$ is determined by its set of players and its set of winning coalitions $(N,\mathcal{W})$ and, due to monotonicity, it is also determined by $(N,\mathcal{W}^m)$.

Two simple games $(N,W)$ and $(N',W')$ are \emph{isomorphic} if there exists a one-to-one correspondence $\varphi:N \rightarrow N'$ such that $S \in W$ \emph{if and only if} $\varphi(S) \in W'$; $\varphi$ is called and \emph{isomorphism} of simple games. For an extensive introduction to simple games see \cite{taylor_zwicker_book}.

Let $(N,W)$ be a simple game. Let $W_a = \{S \in W \, : \, a \in S \}$, $\tau_{ab}:N \rightarrow N$ denotes the transposition of players $a,b \in N$. The \emph{desirability relation}, introduced by Isbell in \cite{Is58}, is the binary relation $\succsim$ on $N$:
$a\succsim b$ if and only if  $\tau_{ab}(W_b)\subseteq W_a$
and, say that $a$ is \emph{at least as desirable as} $b$. The relation $\succsim$ is a preorder. The \emph{equi-desirability relation}, is the equivalence relation $\approx$ on $N$:
$a\approx b$  if and only if $a\succsim b$ and $b \succsim a$.
The preorder $\succsim$ induces an ordering $ \geqslant $ in the quotient set $N / \approx$ of equi-desirable classes, $N_1, N_2, \dots,N_t$.
Hence,   $N_p \geqslant N_q$  if and only if $a\succsim b$ for any $a\in N_p$ and any $b \in N_q$.

By $N_1,\dots,N_t$ we denote the equivalence classes of players, i.e., $t$ is the number of equivalence classes of players. For each coalition $S\subseteq N$ we define a corresponding 
coalition vector $\overline{s}=\left(\left|S\cap N_1\right|,\dots,\left|S\cap N_1\right|\right)\in\N^t$. For each coalition vector $m$, there can be several corresponding 
coalitions. However, all those coalitions have the same $v$-value, so that we speak of winning, minimal winning, losing, and maximal losing vectors. Let $r$ be the number
of different minimal winning vectors of $v$. For the special coalition $N$ we use the notation $\overline{n}=\left(n_1,\dots,n_t\right)$.

 As each simple game 
$v$ is uniquely characterized by its set of minimal winning coalitions and the player set $N$ it is also uniquely described by its set of minimal winning vectors
and $N_1,\dots,N_t$ (see Theorem $1$ in \cite{KuSa23DAM}). This characterisation is useful to enumerate non-isomorphic simple games since two isomorphic simple games lie in the same pair $(\overline{n},\mathcal{M})$.

Hence, instead of a set of minimal winning vectors we consider a matrix representation $\mathcal{M}=\left(m_{i,j}\right)\in\N^{r\times t}$ whose
$r$ rows $\overline{m}^1,\dots,\overline{m}^r$ are the minimal winning vectors. W.l.o.g.\ we assume that $\overline{m}^1\succ_{\operatorname{lex}}\overline{m}^2
\succ_{\operatorname{lex}}\dots\succ_{\operatorname{lex}}\overline{m}^r$, where $\succ_{\operatorname{lex}}$ denotes the lexicographical ordering of vectors in 
$\R^t\supset\N^t$. 

\begin{example}
  The pair consisting of $\overline{n}=(3,4,4)$ and $\mathcal{M}=\begin{pmatrix} 1&2&2\end{pmatrix}$ uniquely characterizes a simple game $v$ with $n=11$ players, $t=3$ equivalence 
  classes of players, and $r=1$ minimal winning vectors. We may assume that the players are numbered in such a way such that we have $N_1=\{1,2,3\}$, $N_2=\{4,5,6,7\}$, and $N_3=\{8,9,10,11\}$.
\end{example}

For any permutation $\pi$ in the symmetric group on $t$ elements, we denote by $\left(\overline{n},\mathcal{M}\right)^\pi=\left(\overline{n}^v,\mathcal{M}^v\right)$ the action of $\pi$ on the columns of $\overline{n}$ and $\mathcal{M}$, that is, we obtain a symmetric simple game. For $\pi=\begin{pmatrix}1&2\end{pmatrix}$ we obtain $\overline{n}^\pi=(4,3,4)$ and $\mathcal{M}^\pi=
\begin{pmatrix}2&1&2\end{pmatrix}$. For $\pi=\begin{pmatrix}2&3\end{pmatrix}$ we obtain $\overline{n}^\pi=(3,4,4)$ and $\mathcal{M}^\pi=
\begin{pmatrix}1&2&2\end{pmatrix}$.

The previous example induces the idea to avoid different pairs $(\overline{n},\mathcal{M})$ representing isomorphic games. The way to deal with it is to establish an order (lexicographic) among the matrices $\mathcal{M}$ through the next definition, which will be used in the parameterization in Definition \ref{def:conditions_sg}.

\begin{definition}[Definition 8 in \cite{KuSa23DAM}]
  Let $X=\left(x_{i,j}\right)\in\mathbb{N}^{r\times t}$ and $Y=\left(y_{i,j}\right)\in\mathbb{N}^{r\times t}$. We write $X\le Y$ (or $Y\ge X$) iff $\widehat{x}\le \widehat{y}$ (or 
  $\widehat{y}\ge \widehat{x}$), where
  $$
    \widehat{x}=\left(x_{1,1},\dots,x_{r,1},x_{1,2},\dots,x_{r,2},\dots,x_{1,t},\dots,x_{r,t}\right)\in\mathbb{N}^{rt}
  $$ 
  and
  $$
    \widehat{y}=\left(y_{1,1},\dots,y_{r,1},y_{1,2},\dots,y_{r,2},\dots,y_{1,t},\dots,y_{r,t}\right)\in\mathbb{N}^{rt}.
  $$ 
  In words we say that $X$ is \emph{lexicographically at most as large} as $Y$. We abbreviate the cases when $X\le Y$ and $X\neq Y$ by $X<Y$. The relation is called \emph{lexicographically larger} (or \emph{lexicographically smaller}).  
\end{definition}

In Theorem $1$ of \cite{KuSa23DAM} a parameterization of non-isomorphic simple games is stated in terms of a matrix and a vector deduced from a set of proprieties set in Definition $9$ in the same article. Here, we rewrite the definition restricted in the context of one minimal winning vector, which is the topic we deal in this paper.

\begin{definition}\label{def:conditions_sg} For a given simple game $v$ with $t$ equivalence classes of
players and one minimal winning vector $m$, i.e. $r=1$, let $\overline{n}\in\mathbb{N}_{>0}^t$, $\mathcal{M}\in \mathbb{N}^{t}$ invariants as defined above, where $t\in\mathbb{N}_{>0}$. We say that
$(\overline{n},\mathcal{M})$ is a \emph{proper representation} of $v$ if the following
conditions are satisfied:

  \begin{enumerate}
    \item[(a)] $\overline{n}_1\ge\overline{n}_2\ge\dots\ge\overline{n}_t>0$, $\sum_{i=1}^t \overline{n}_i=n$;
    \item[(b)]  $0\leq m_i\leq n_i$ \ for all $1\leq i\leq t$,
    \item[(c)] $\mathcal{M}\ge \mathcal{M}^\pi$ for every permutation $\pi$ of $\{1,\dots,t\}$ with $\overline{n}=\overline{n}^\pi$.           
  \end{enumerate}
\end{definition}

\begin{remark}    
Thus, Theorem $1$ in \cite{KuSa23DAM} applied in the one minimal vector context, says that the number of solutions $(\overline{n},\mathcal{M})$ of the inequalities of Definition \ref{def:conditions_sg} coincides with the number of non-isomorphic simple games with one minimal winning vector.
\end{remark}

We denote the set of pairs $(\overline{n},\mathcal{M})$ satisfying the conditions of Definition \ref{def:conditions_sg} by $\mathcal{SG}(n,t,r)$, and $SG(n,t,r)$ by its cardinality. Analogously, we denote $\mathcal{SG}^{\neg v, \neg n}(n,t,r)$ by the subset of games in $\mathcal{SG}(n,t,r)$ not containing neither nulls nor veto players, and $SG^{\neg v, \neg n}(n,t,r)$ its cardinality.  As mentioned, the results in this paper involve the sets with $r=1$.

The goal of this article, that we approach in the next section, is to count how many of these pairs $(\overline{n},\mathcal{M})$ of Definition \ref{def:conditions_sg} there are. Before that, two last definitions, that will be used in the enumeration results.

\begin{definition}[Cauchy product]\label{Def:cauchy_product} 
Now we point out the form of the coefficients of series obtained as the product of two series. Let $a(x)=\sum_{n\geq 0}a_nx^n$ and  $b(x)=\sum_{n\geq 0}b_nx^n$. If $c(x)=a(x)b(x)$, then $c_n$ can be obtained in

\begin{equation*}
    a(x)b(x)=\sum_{n\geq 0}\sum_{k=0}^n a_k b_{n-k}x^n
\end{equation*}
so
\begin{equation*}
    c_n=\sum_{k=0}^n a_k b_{n-k},
\end{equation*}
i.e. the so-called Cauchy product. 
\end{definition}

Later we will compute the Cauchy product of $t$ series so, in order to make the notation more manageable, we introduce the discrete convolution, which is defined for an infinite series over any $n\in\mathbb{Z}$, but it is only applied to the context with coefficients different from zero when $n\in\mathbb{N}\cup\{0\}$.

\begin{definition}[Discrete convolution]\label{Def:discrete_convolution}
    Given two functions $f(n),g(n)$ we define the (discrete) convolution as
    \begin{equation*}
        (f * g)(n)=\sum_{k=-\infty }^{+\infty } f(k)g(n-k).
    \end{equation*}

However, if the functions $f,g$ are different from zero only when its variable takes values between $0$ and $n$, we can rephrase the definition to:
    \begin{equation*}
        (f * g)(n)=\sum_{k=0}^{n} f(k)g(n-k).
    \end{equation*}

Recall convolution is commutative. Additionally, given $f_1,\dots,f_t$ we denote 
\begin{equation*}
    \Conv_{i=1}^t f_i=f_1*\dotsc* f_t.
\end{equation*}
We recall that the sequence $1,0,0,0,\dots$ is the neutral element of the (discrete) convolution. These sequences, $a_n$, can be represented by an infinity sum of powers of $x$, $\sum_{n\geq 0}a_nx^n$. Furthermore, we write the sequences involved in this article in terms of generating functions, $f(x)$. For example, 
\begin{equation*}
    f(x)=\frac{1}{1-x}=\sum_{n\geq 0} x^n=1+x+x^2+x^3+x^4+\dots
\end{equation*}
Note the equality holds since $(1-x)\sum_{n\geq 0} x^n=\sum_{n\geq 0} x^n-\sum_{n\geq 1} x^n=1$.

\end{definition}

\section{Enumeration of simple games with minimum}\label{S:Enum}

When the number of rows of the matrix, i.e. minimal winning vectors, is just one, we have enumerated how many simple games of this kind there are. Concretely, Theorem \ref{thm:enum} is the main result of this paper. From now on, we call such games simple games with minimum.

\begin{definition}\label{def:sg_with_sym}
  By $\SGs(n,t,r)$ we denote the number of different pairs $\left(\overline{n},\mathcal{M}\right)$, where $\overline{n}=\left(n_1,\dots,n_t\right)\in \N_{>0}^t$ with $\sum_{i=1}^t n_i=n$ and $\mathcal{M}\in\N^{r\times t}$, representing a simple game with $n$ players, $t$ equivalence classes of players ($N_i$ with cardinality $n_i$), and $r$ minimal winning vectors\footnote{In the literature the rows of $\mathcal{M}$ can be called minimal winning vectors.}, (specified by the rows of $\mathcal{M}$; assuming a decreasing lexicographical order of the rows of $\mathcal{M}$). 
\end{definition}

In other words, the difference between $SG(n,t,1)$ introduced in Definition \ref{def:conditions_sg} and,  $\SGs(n,t,r)$ in Definition \ref{def:sg_with_sym} is that $SG(n,t,r)$ counts after factoring out symmetry.

\begin{remark}
    Focusing in the case with only one equivalence class of players, we need to count the possible integer solutions of 
    \begin{equation*}
        1\leq m \leq n.
    \end{equation*}
    Moreover, if we want to exclude veto players, i.e. m=n, we get
    \begin{equation*}
        1\leq m \leq n-1.
    \end{equation*} which has $n-1$ solutions for $n\geq 1$. Furthermore,  we point out that null players are not allowed with only one class of players. Note that the sequence $0,0,1,2,3,4,5,\dots$ corresponds to the coefficients of the generating function $g(x)=\dfrac{x^2}{(1-x)^2}=x^2+2x^3+3x^4+4x^5+\dots$
\end{remark}

In order to generalize the previous result (i.e., excluding for the moment vetoers and null players) into the $t$-partite case we need to consider the integer solutions of the following system of inequalities.

    \begin{align}\label{eq:ineq}
    \begin{split}
        1\leq\hspace{0.1cm} & m_1 \leq n_1-1 \\
        1\leq\hspace{0.1cm} & m_2 \leq n_2-1 \\
        & \vdots \\
        1\leq\hspace{0.1cm} & m_t \leq n_t-1 \\
    \end{split}
    \end{align}
subject to $\sum_{i=1}^t n_i=n$.

The first step consists of counting all the possibilities for the vector $\overline{n}$ there are. Then count how many games there are for each possible $\overline{n}$, i.e., how many matrices $\mathcal{M}$ there are, and finally remove some isomorphisms. Additionally, we need to point out that by definition, for the case of one type of player, it is not possible to have all players being null or all players being vetoers. This kind of player need special treatment in the $t$-partite case.

Observe that for $t=1$ we need to have $r=1$. Next, we want to study the situation where $r=1$ (and $t>1$). Here we have $\mathcal{M}=\left(\overline{m}^1\right)=\left(m_{1,1},\dots,m_{1,t}\right)$, or $\left(m_{1},\dots,m_{t}\right)$ for simplicity. 
We observe that the players in equivalence class $N_i$ are null players if and only if $m_{i}=0$. Similarly, the players in equivalence class $N_i$ are veto players if and only if $m_{i}=n_i$.

\begin{definition}


For games with neither null nor veto players we use the notations 
$\SGs^{\neg n,\neg v}(n,t,r)$ and $\SG^{\neg n,\neg v}(n,t,r)$. 

  For each $(t,r)\in\N_{>0}^2$ we define the generation functions
  \begin{gather*}
    \hat{f}(x;t,r):=\sum_{n\in\N} \SGs(n,t,r)\cdot x^n,\quad\text{ }\quad f(x;t,r):=\sum_{n\in\N} \SG(n,t,r)\cdot x^n, \\
    \hat{f}^{\neg n,\neg v}(x;t,r):=\sum_{n\in\N} \SGs^{\neg n,\neg v}(n,t,r)\cdot x^n\quad\text{and}\quad f^{\neg n,\neg v}(x;t,r):=\sum_{n\in\N} \SG^{\neg n,\neg v}(n,t,r)\cdot x^n,
  \end{gather*}
\end{definition}

\begin{proposition}\label{prop:bound}$\,$
\begin{enumerate}[label=(\alph*)]
    \item If $v$ is a simple game with $n$ players, $t$ equivalence classes of players, without null or veto players. Then, we have $t\le \left\lfloor\tfrac{n}{2}\right\rfloor$.
    \item If $v$ is a simple game with $n$ players and,  $t$ equivalence classes of players. Then, we have $t\le \left\lfloor\tfrac{n}{2}+1\right\rfloor$.
  \end{enumerate}
\end{proposition}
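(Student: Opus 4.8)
The plan is to exploit the special structure of games with minimum, where (as observed just before the proposition) a coalition $S$ is winning if and only if its coalition vector $\overline{s}$ dominates the unique minimal winning vector $m$ componentwise, i.e. $\overline{s}_i\ge m_i$ for all $i$. The first step I would take is to translate ``null'' and ``veto'' into conditions on the entries $m_i$: namely, that the players of class $N_i$ are null if and only if $m_i=0$, and veto if and only if $m_i=n_i$. For the null direction, if $m_i=0$ then coordinate $i$ never constrains winningness, so adding or removing class-$i$ players never changes the value; whereas if $m_i\ge 1$ one can exhibit a losing coalition with $\overline{s}_i=m_i-1$ and $\overline{s}_k=m_k$ for $k\neq i$ that becomes winning upon adding a single class-$i$ player. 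For the veto direction, a winning coalition avoiding a fixed player of $N_i$ exists precisely when some $\overline{s}\ge m$ has $\overline{s}_i\le n_i-1$, which happens if and only if $m_i\le n_i-1$; hence the players of $N_i$ are veto exactly when $m_i=n_i$.

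Given this dictionary, part (a) is almost immediate. If $v$ has neither null nor veto players, then every class satisfies $1\le m_i\le n_i-1$, which forces $n_i\ge 2$ for all $i$. Summing over the $t$ classes and using $\sum_{i=1}^t n_i=n$ yields $n\ge 2t$, and since $t$ is an integer this gives $t\le\lfloor n/2\rfloor$.

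For part (b) I would first record the (standard, $r$-independent) fact that all null players are mutually equidesirable and all veto players are mutually equidesirable: for two null players $a,b$ and any $T$ disjoint from $\{a,b\}$, the coalitions $T\cup\{a\}$ and $T\cup\{b\}$ have the same value as $T$, so $a\approx b$; and for two veto players the premise ``$T\cup\{b\}$ winning'' is vacuous, since such a coalition misses the veto player $a$, so again $a\approx b$. Consequently there is at most one null class and at most one veto class, and these are distinct since $m_i=0$ and $m_i=n_i$ cannot both hold when $n_i\ge 1$. By the dictionary of the first step, any class of size $n_i=1$ is necessarily null ($m_i=0$) or veto ($m_i=n_i=1$), so at most two classes can be singletons. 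Writing $s\le 2$ for the number of singleton classes, we get $n=\sum_{i=1}^t n_i\ge s+2(t-s)=2t-s\ge 2t-2$, hence $t\le n/2+1$ and therefore $t\le\lfloor n/2+1\rfloor=\lfloor n/2\rfloor+1$.

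The only genuinely substantive step is the characterization in the first paragraph; everything afterwards is bookkeeping on coalition sizes. The point where I would be most careful is precisely the reliance on $r=1$: it is what guarantees the equivalence ``$S$ winning $\iff\overline{s}\ge m$'' and hence that a non-null, non-veto class cannot be a singleton. For general $r$ this fails (a singleton class can be non-null and non-veto), so these bounds are genuinely a feature of simple games with minimum, and the write-up should flag that the single-minimal-winning-vector hypothesis is used exactly at this point.
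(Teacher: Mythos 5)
Your proof is correct and follows essentially the same route as the paper: part (a) comes from the fact that a non-null, non-veto class forces $1\le m_i\le n_i-1$ and hence $n_i\ge 2$, and part (b) from allowing at most two additional singleton classes (one of nulls, one of vetoes), yielding $n\ge 2t-2$. If anything, your write-up is more complete than the paper's, since you explicitly prove the null/veto dictionary ($m_i=0$ resp.\ $m_i=n_i$) and the fact that all null players, and likewise all veto players, lie in a single equivalence class --- facts the paper's proof uses only implicitly.
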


\begin{proof}

$(a)$ The set of inequalities in Equation (\ref{eq:ineq}) implies $n_i>1$ for any $i=1,\dots,t$, which automatically imposes a bound on the maximum value of $t$. For $n$ even, the maximum possible value of $t$ is achieved when all $n_i$ are equal to $2$, hence $t=\frac{n}{2}$, and for $n$ odd, the maximum appears when all $n_i$ but one are equal to two and the other is equal to three, hence $t=\frac{n-1}{2}$. In other words $t\le \left\lfloor\tfrac{n}{2}\right\rfloor$.

\medskip

$(b)$ A null player or a veto player can belong to a class of only one player, hence, similarly than in $(a)$, there is room for one more space since we can split one class of two players into a single veto player and a single null player, thus $t\le \left\lfloor\tfrac{n}{2}+1\right\rfloor$.

Finally note that for any of the vectors $\overline{n}$ mentioned above, there exists one game, consisting in $m_i=1$ for any class of players except the class of nulls (if exists) which has assigned a $0$.

\end{proof}

\begin{proposition}\label{prop:integer_sol}
    Given $t$ and $n$, the number of integer solutions of     \begin{align*}
        1\leq\hspace{0.1cm} & m_1 \leq n_1-1 \\
        1\leq\hspace{0.1cm} & m_2 \leq n_2-1 \\
        & \vdots \\
        1\leq\hspace{0.1cm} & m_t \leq n_t-1 \\
    \end{align*}
subject to $\sum_{i=1}^t n_i=n$, corresponds to the $n$-th coefficient of the generating function $g(x)^t$. These coefficients can be written as

   $$
\widehat{SG}^{\neg v, \neg n}(n,t,1) =  \dbinom{n-1}{2t-1} 
$$ or in terms of generating functions
\begin{equation}\label{eq:g(x)^t}
    \hat{f}^{\neg n,\neg v}(x;t,1)=g(x)^t=\sum_{n\geq 0} \dbinom{n-1}{2t-1} x^n
\end{equation}
Note that, by binomial's definition, it is $0$ when $n<2t$.

\end{proposition}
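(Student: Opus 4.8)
The plan is to set up the generating function for a single equivalence class, multiply $t$ copies of it via the Cauchy product to handle all classes at once, and then read off the closed form. First I would treat one class in isolation. For a fixed class of size $n_i$ the entry $m_i$ may take any value in $\{1,\dots,n_i-1\}$, so there are exactly $n_i-1$ admissible choices, and this already forces $n_i\ge 2$. Recording the class size in the exponent, the generating function for a single class is therefore $\sum_{n_i\ge 2}(n_i-1)x^{n_i}=\frac{x^2}{(1-x)^2}=g(x)$, in agreement with the remark preceding the statement.

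Next I would account for the $t$ classes simultaneously. Since the constraint $1\le m_i\le n_i-1$ involves only $n_i$, the choices in distinct classes are independent once the sizes $n_1,\dots,n_t$ are fixed; the sole coupling between classes is the requirement $\sum_{i=1}^t n_i=n$. Consequently the number of solutions with prescribed total $n$ is the $t$-fold convolution of the sequence of per-class counts with itself, which by Definition \ref{Def:cauchy_product} and Definition \ref{Def:discrete_convolution} is precisely $[x^n]\,g(x)^t=\sum_{n_1+\dots+n_t=n}\ \prod_{i=1}^t(n_i-1)$. This identifies the solution count with the $n$-th coefficient of $g(x)^t$ and hence yields $\hat{f}^{\neg n,\neg v}(x;t,1)=g(x)^t$.

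Finally I would extract the closed form. The key observation is the factorisation $g(x)=\left(\frac{x}{1-x}\right)^2$, whence $g(x)^t=\left(\frac{x}{1-x}\right)^{2t}$. Since $\frac{x}{1-x}=\sum_{k\ge 1}x^k$ is the generating function of a single positive part, $g(x)^t$ counts compositions of $n$ into $2t$ positive parts, and the number of such compositions is $\binom{n-1}{2t-1}$ by stars and bars; equivalently one expands $(1-x)^{-2t}=\sum_{k\ge 0}\binom{k+2t-1}{2t-1}x^k$ and shifts the index by $2t$. Either route gives $[x^n]\,g(x)^t=\binom{n-1}{2t-1}$, which vanishes for $n<2t$ as claimed. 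I expect no genuine obstacle here; the only step deserving a line of care is the convolution argument, namely confirming that the per-class counts multiply so that the Cauchy product reproduces exactly the constrained sum over $(n_1,\dots,n_t)$. If one prefers to avoid the binomial series altogether, the composition count admits a direct bijective proof: send each pair $(n_i,m_i)$ to the two positive integers $(m_i,\,n_i-m_i)$, so that $(n_1,m_1),\dots,(n_t,m_t)$ corresponds to a composition of $n$ into $2t$ positive parts, and this correspondence is invertible.
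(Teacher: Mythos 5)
Your proof is correct, but it follows a genuinely different route from the paper. The paper proceeds by induction on $t$: it derives the recursion $\widehat{SG}^{\neg v,\neg n}(n,t,1)=\sum_{i}\widehat{SG}^{\neg v,\neg n}(n-i-1,t-1,1)\cdot(i-1)$ by splitting off the last equivalence class according to its size, and then evaluates the resulting sum with the Chu--Vandermonde identity to land on $\binom{n-1}{2t-1}$. You instead exploit the factorisation $g(x)^t=\bigl(\tfrac{x}{1-x}\bigr)^{2t}$ and read off the coefficient either from the binomial series for $(1-x)^{-2t}$ or as the number of compositions of $n$ into $2t$ positive parts via stars and bars; your bijection $(n_i,m_i)\mapsto(m_i,\,n_i-m_i)$ makes this last step entirely combinatorial and self-contained. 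What each approach buys: the paper's induction rehearses exactly the class-by-class convolution mechanism that reappears later (the recursion in Equation~(\ref{eq:recursion}) and Corollary~\ref{cor:SG_neq_v_n}), so the technique is amortised over the rest of the paper; your argument is shorter, dispenses with the Chu--Vandermonde identity altogether, and the bijection to compositions gives a transparent explanation of \emph{why} the answer is the clean binomial coefficient $\binom{n-1}{2t-1}$ rather than having it emerge from an identity. Both proofs establish the same statement, including the vanishing of the coefficient for $n<2t$ (in your setting, because $n$ cannot be composed into $2t$ positive parts).
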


Before the proof, we recall a binomial property, known as Chu-Vandermonde identity, which applies for any integers $j,k,r$ such that $0\leq j\leq k\leq n$,
\begin{equation}\label{Eq:chuvandermonde}
    \sum_{m=0}^r\binom{m}{j}\binom{r-m}{k-j}=\binom{r+1}{k+1}.
\end{equation}

\begin{proof}
    By induction on $t$, we know that for $t=1$, it holds $\binom{n-1}{n-2}=n-1$, as expected. Suppose it is true for $t-1$. Note 
    \begin{equation*}
        \widehat{SG}^{\neg v, \neg n}(n,t,1)=\sum_{i=1}^{n-2t+2} \widehat{SG}^{\neg v, \neg n}(n-i-1,t-1,1)\cdot (i-1).
    \end{equation*}
    The previous formula is true due to the following argument: it is not possible to get the games of $n$ players and $t$ classes from games with $n-1$ players and $t-1$ classes since having $n_t=1$ would imply to have null or veto players, and we are not counting them here. The way to have $t$ equivalent classes from $t-1$ classes is: first to consider the case with $t-1$ classes of $n-2$ players and adding one new class with two players, then consider all cases with $t-1$ classes of $n-3$ players and adding a class of three players (i.e. $n_t=3$), and so on. 
    
    We can do this process until the maximum size of $n_t=n-2t+2$, since in the previous $t-1$ classes there should be at least two player on each class. Also, pointing out that adding a class of size $n_t$ generates $n_t-1$ options in the new restriction $1\leq m_t<n_t$, we need to multiply $\widehat{SG}^{\neg v, \neg n}(n-i,t-1,1)$ by $i-1$.  

    Now, using induction for $t\geq 2$,
    \begin{align*}
        \widehat{SG}^{\neg v, \neg n}(n,t,1)&=\sum_{i=1}^{n-2t+2} \binom{n-i-1}{2(t-1)-1}\cdot (i-1)\\&=\sum_{i=1}^{n-2t+2} \binom{n-i-1}{2t-3}\cdot (i-1)\\&=\sum_{i=1}^{n-2} \binom{n-i-1}{2t-3}\cdot (i-1)\\
        \end{align*}
since $\binom{a}{b}=0$ when $a<b$. Now applying the Chu-Vandermonde identity in Equation (\ref{Eq:chuvandermonde}) with $j=1$, $m=i-1$, $k=2t-2$ and $r=n-2$ we obtain
        \begin{align*}
        &\dbinom{n-1}{2t-1}
    \end{align*}
as expected. Also, for $n<2t$ there are no terms in the summation so $\widehat{SG}^{\neg v, \neg n}(n,t,1)=0$.    
\end{proof}

Note that the integer solutions of Proposition \ref{prop:integer_sol} still contain isomorphic games.

\begin{example}
    The games $(\overline{n}_1,\mathcal{M}_1)$ and $(\overline{n}_2,\mathcal{M}_2)$ are isomorpic where
\vspace{-1cm}\begin{multicols}{2}
\begin{align*}
\overline{n}_1&=(2,3,3,4)\\             
\mathcal{M}_1&=\begin{pmatrix}
1 & 1 & 2 & 1
\end{pmatrix}
\end{align*}
\bigskip
\begin{align*}
\overline{n}_2&=(2,3,4,3)\\             
\mathcal{M}_2&=\begin{pmatrix}
1 & 1 & 1 & 2
\end{pmatrix}
\end{align*}
\end{multicols}
{\noindent and both are integer solutions of Proposition \ref{prop:integer_sol}. In fact, there are in total $24$ solutions of Equation (\ref{eq:ineq}) that lie in the same proper representation. The number of proper representations has to be computed using symmetric groups and cyclic indices. In other words, while all $24$ solutions satisfy Condition $(b)$ in Definition \ref{def:conditions_sg}, only two of them satisfy Condition $(a)$, and only one of these two also satisfies $(c)$. See Example $3$ in \cite{KuSa23DAM} for more details.}
\end{example}

In order to count only non-isomorphic games we should apply a similar argument than the one in Lemma $7$ in \cite{KuSa23DAM} but for an arbitrary number of equivalence classes of players, $t$. To do that, we apply Pólya's Enumeration Theorem for generating functions (see \cite{Po37,PoRe87,Re27}). 

\begin{definition}
  Generically speaking, the cycle index $Z(G)$ of a permutation group $G$ is the average of the cycle index monomials of all the permutations $g\in G$, i.e.\ 
  \begin{equation} 
    Z(G)=\frac{1}{|G|}\cdot  \sum_{g\in G}\prod_{k=1}^n a_k^{j_k(g)},
  \end{equation}
  where $n$ is the degree of $G$ and $j_k(g)$ encodes the cycle structure of $g$, (see e.g. \cite{wiki:cyclic_index}).
\end{definition}

\begin{example} For example, reviewing the next symmetric groups, we have:
  \begin{eqnarray*}
    Z(S_2) &=& \frac{a_1^2+a_2}{2}\\ 
    Z(S_3) &=& \frac{a_1^3+3a_1a_2+2a_3}{6}\\
    Z(S_4) &=& \frac{a_1^4+8a_1a_3+3a_2^2+6a_1^2a_2+6a_4}{24}.\\
  \end{eqnarray*}
\end{example}

Considering the symmetric group $S_t$ where each element of the group consists in one equivalence class, and using that the enumeration we are pursuing is defined by $g(t)$ for a single equivalence class, Pólya's Enumeration Theorem states that the number of \emph{non-isomorphic} simple games with one minimal winning profile, $n$ players and $t$ classes of players and neither veto nor null players, denoted by $SG^{\neg v, \neg n}(n,t,1)$, is given by the coefficients of the following generating function:
\begin{equation}\label{eq:symmetric}
   f^{\neg n,\neg v}(x;t,1)=\sum_{j_1+2j_2+3j_3+\dots+tj_t=t}\frac{1}{\prod_{k=1}^t(k^{j_k}j_k!)}\prod_{k=1}^t g(x^k)^{j_k},
\end{equation}
or recursively
    \begin{equation}\label{eq:recursion}
        f^{\neg n,\neg v}(x;t,1)=\frac{1}{t}\sum_{k=1}^{t}\left(g(x^k)\cdot f^{\neg n,\neg v}(x;t-k,1)\right).
    \end{equation}

Note that another notation in the literature for the generating function in Equation (\ref{eq:symmetric}) is $Z_{S_t}(g(x),g(x^2),g(x^3),\dots)$.

Pólya's Enumeration Theorem applies for constants, polynomials and generating functions. The computation of this formula result into another generating function that give us the coefficients of an enumeration we are looking for (Theorem \ref{thm:enum_neq_v_n}). As an example, we will use this equality to compute the number of non-isomorphic games of nine players and three classes of players, so $n=9$, i.e. we will look for the term $x^9$ of the resulting generating function when $t=3$. 

However, first we need to understand the terms $g(x^k)^{j_k}$, so we rephrase Equation (\ref{eq:g(x)^t}) to announce Theorem \ref{thm:enum_neq_v_n}.

\begin{proposition}\label{prop:g^a^b}
    Given integers $a>1$ and $b>0$, we have
    \begin{equation*}
        g(x^a)^b=\sum_{n\geq 0}\binom{\left(\frac{n}{a}-1\right)\cdot\mathbbm{1}_{\{n\equiv 0 \mod{a}\}}}{2b-1}x^n
    \end{equation*}
    where $\mathbbm{1}_{\{n\equiv 0 \mod{a}\}}$ states for the characteristic function that takes value $1$ when $n\equiv 0 \mod{a}$, and $0$ otherwise. 
\end{proposition}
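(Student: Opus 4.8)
The plan is to obtain this expansion directly from Proposition~\ref{prop:integer_sol} by the substitution $x\mapsto x^{a}$, followed by a re-indexing of the resulting series. First I would instantiate Equation~(\ref{eq:g(x)^t}) with $t$ replaced by $b$, which gives the closed form $g(x)^{b}=\sum_{m\geq 0}\binom{m-1}{2b-1}x^{m}$; this is exactly the one-variable enumeration already established, now read off for the exponent $b$ rather than for the number of classes. No new combinatorics is needed here, only the observation that Proposition~\ref{prop:integer_sol} is a statement about $g(x)$ raised to an arbitrary positive integer power.

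Next I would replace $x$ by $x^{a}$. Since $x^{a}$ has no constant term, the composition is a well-defined formal power series and termwise substitution is legitimate, so $g(x^{a})^{b}=\sum_{m\geq 0}\binom{m-1}{2b-1}x^{am}$. To extract the coefficient of $x^{n}$ I would note that the monomial $x^{am}$ contributes to $x^{n}$ precisely when $am=n$, i.e.\ when $a\mid n$ and $m=n/a$; otherwise no term of the series lands on $x^{n}$. Hence the coefficient of $x^{n}$ equals $\binom{n/a-1}{2b-1}$ when $a\mid n$ and $0$ otherwise, which I would abbreviate as $\binom{n/a-1}{2b-1}\cdot\mathbbm{1}_{\{n\equiv 0 \mod{a}\}}$.

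The final step is to check that this matches the stated form, in which the indicator sits inside the binomial multiplying $(n/a-1)$. When $a\mid n$ the indicator is $1$ and both expressions reduce to $\binom{n/a-1}{2b-1}$, so there is nothing to prove. When $a\nmid n$ the product with the indicator is $0$, and the factored form gives $\binom{0}{2b-1}$; the only point worth verifying is that this vanishes, which holds because $b>0$ forces $2b-1\geq 1$, so the lower index strictly exceeds the upper one. This small edge-case verification---that pulling the indicator into the numerator is harmless precisely because the empty binomial $\binom{0}{2b-1}$ is zero---is the only subtlety in the argument; the substitution and the re-indexing are otherwise routine. (One could also remark that the reasoning works verbatim for $a=1$, where it simply recovers Proposition~\ref{prop:integer_sol}, so the hypothesis $a>1$ is used only to match the intended application to cycles of length at least two in the Pólya expansion.)
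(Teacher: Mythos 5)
Your proof is correct and follows essentially the same route as the paper's: substitute $x\mapsto x^{a}$ into the expansion of $g(x)^{b}$ given by Proposition~\ref{prop:integer_sol}, re-index via $n=am$, and observe that the coefficient vanishes unless $a\mid n$. If anything, your argument is slightly more careful than the paper's, since you explicitly verify the edge case that $\binom{0}{2b-1}=0$ justifies moving the indicator inside the binomial's upper entry.
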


\begin{proof}
    Note when $a=1$ the formula matches with Proposition $\ref{prop:integer_sol}$. Also, to consider the power of a variable keeps the coefficients the same but translated in therms of this power. Considering $y=x^a$,
    \begin{equation*}
        g(y)^b=\dfrac{y^{2t}}{(1-y^2)^t}=\sum_{n'\geq 0}\dbinom{n'-1}{2t-1} y^{n'},    \end{equation*}
        and undoing the change, we obtain
    \begin{equation*}
        g(x^a)=\sum_{n\geq 0}(n'-1) x^{an'},    \end{equation*}
        hence $n=an'$ and $n'=\frac{n}{a}$, so the coefficient is $\dbinom{\frac{n}{a}-1}{2t-1}$ only in the multiples of $a$, and zero otherwise. In other words,
        \begin{equation*}
            g(x^a)^b=\sum_{n\geq 0}\binom{\left(\frac{n}{a}-1\right)\cdot\mathbbm{1}_{\{n\equiv 0 \mod{a}\}}}{2b-1}x^n
        \end{equation*}        
\end{proof}
\vspace{-0.2cm}
Also, $g(x^a)^0=1$, i.e. the sequence $1,0,0,0,\dots$, which plays the role of the neutral element in the convolution. Furthermore, note that the fraction $\frac{n}{a}-1$ is an integer when the characteristic function takes value $1$.

Now we can give an example of Pólya's Enumeration Theorem for a fixed number of players instead of all generating functions. Note first that $f^{\neg n,\neg v}(x;0,1)=1$ and $f^{\neg n,\neg v}(x;1,1)=g(x)$. 
\begin{example}\label{ex:9players}
Using the previous results, we find the number of simple games with one minimal winning vector, neither veto nor null players, $9$ players and $3$ equivalence classes of players. To do it, we get the coefficient of the term $x^9$ of the following generating function. However, to simplify the computations, we can replace $g(x)$ by only $x^2+2x^3+\dots+6x^7$. Using the recursion formula in Equation (\ref{eq:recursion}), we get

    \begin{gather*}
    f^{\neg n,\neg v}(x;3,1)=\frac{1}{3}\sum_{k=1}^{3}\left(g(x^k)\cdot f^{\neg n,\neg v}(x;3-k,1)\right),
\end{gather*}
so we need to compute before $f^{\neg n,\neg v}(x;2,1)$. 
\begin{align*}
     f^{\neg n,\neg v}(x;2,1)&=\frac{1}{2}\left((x^2+2x^3+\dots+6x^7)\cdot f^{\neg n,\neg v}(x;1,1)+(x^4+2x^6+\dots+6x^{14})\cdot f^{\neg n,\neg v}(x;0,1)\right),\\
     &=\frac{1}{2}\left((x^2+2x^3+\dots+6x^7)\cdot (x^2+2x^3+\dots+6x^7)+(x^4+2x^6+\dots+6x^{14})\right)
\end{align*}
which is equal to some coefficients of degree bigger than $7$ and the term
\begin{equation*}
     x^4+ 2 x^5+ 6 x^6 +10 x^7,
\end{equation*}
which is the one we need to compute the remaining term:
\begin{gather*}
    f^{\neg n,\neg v}(x;3,1) =\frac{1}{3}\left((x^2+2x^3+\dots+6x^7)\cdot(x^4+ 2 x^5+ 6 x^6 +10 x^7+\dots)\right)+\\+\left((x^4+2x^6+\dots+6x^{14})\cdot (x^2+2x^3+\dots+6x^7)\right)+\left((x^6+2x^9+\dots+6x^{21})\cdot 1\right)=\\
    =x^6+2 x^7 +6 x^8 +14 x^9 +\dots
\end{gather*}
So there are $14$ non-isomorphic simple games with one minimum wining vector, $9$ players and $3$ classes of players without null or veto players. Concretely, the following ones divided into $3$ possible values of the vector $\overline{n}$:

\begin{multicols}{3}
\noindent  
\begin{align*}
\overline{n}_1&=(5,2,2)\\    
\mathcal{M}_1&=\begin{pmatrix}
4 & 1 & 1 
\end{pmatrix}\\             
\mathcal{M}_2&=\begin{pmatrix}
3 & 1 & 1 
\end{pmatrix}\\             
\mathcal{M}_3&=\begin{pmatrix}
2 & 1 & 1 
\end{pmatrix}\\             
\mathcal{M}_4&=\begin{pmatrix}
1 & 1 & 1 
\end{pmatrix}
\end{align*}
\columnbreak
\begin{align*}
\overline{n}_2&=(4,3,2)\\
\mathcal{M}_5&=\begin{pmatrix}
3 & 2 & 1 
\end{pmatrix}\\             
\mathcal{M}_6&=\begin{pmatrix}
2 & 2 & 1 
\end{pmatrix}\\             
\mathcal{M}_7&=\begin{pmatrix}
1 & 2 & 1 
\end{pmatrix}\\             
\mathcal{M}_8&=\begin{pmatrix}
3 & 1 & 1 
\end{pmatrix}\\             
\mathcal{M}_9&=\begin{pmatrix}
2 & 1 & 1 
\end{pmatrix}\\             
\mathcal{M}_{10}&=\begin{pmatrix}
1 & 1 & 1 
\end{pmatrix}
\end{align*}
\columnbreak
\begin{align*}
\overline{n}_3&=(3,3,3)\\             
\mathcal{M}_{11}&=\begin{pmatrix}
2 & 2 & 2 
\end{pmatrix}\\             
\mathcal{M}_{12}&=\begin{pmatrix}
2 & 2 & 1 
\end{pmatrix}\\             
\mathcal{M}_{13}&=\begin{pmatrix}
2 & 1 & 1 
\end{pmatrix}\\             
\mathcal{M}_{14}&=\begin{pmatrix}
1 & 1 & 1 
\end{pmatrix}
\end{align*}
\end{multicols}
\end{example}
Now we are able to represent the general term $ SG^{\neg v,\neg n}(n,t,1)$ replacing the generic terms $g(x^k)^{j_k}$ of Equation (\ref{eq:recursion}) by the ones obtained in Proposition \ref{prop:g^a^b} and representing the multiplication of series as the convolution of its coefficients.

\begin{theorem}\label{thm:enum_neq_v_n}
    The number of non-isomorphic simple games with one minimal winning vector and neither veto nor null players is given by the following equation.
    \begin{equation*}    
    SG^{\neg v,\neg n}(n,t,1)=\sum_{j_1+2j_2+3j_3+\dots+tj_t=t}\frac{1}{\prod_{k=1}^t(k^{j_k}j_k!)}\Conv_{\substack{k=1\\
                  j_k\neq 0}}^t\binom{\frac{n-k}{k}\cdot\mathbbm{1}_{\{n\equiv 0 \mod{k}\}}}{2j_k-1}
\end{equation*}
where $\Conv$ denotes the convolution product of all values varying $k$ from $1$ to $t$. Note that the convolution applies to a function depending on $n$, where $k$ and $j_k$ are fixed.
\end{theorem}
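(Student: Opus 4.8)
The plan is to obtain the formula by extracting the $n$-th coefficient from the generating-function identity in Equation (\ref{eq:symmetric}), which already encodes Pólya's enumeration theorem, and then translating products of generating functions into convolutions of their coefficient sequences. Since every ingredient has been prepared in the preceding propositions and definitions, the argument is largely a bookkeeping assembly rather than a new computation.

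First I would record that, by the definition of the generating function $f^{\neg n,\neg v}(x;t,1)$, we have $SG^{\neg v,\neg n}(n,t,1)=[x^n]\,f^{\neg n,\neg v}(x;t,1)$, where $[x^n]$ denotes coefficient extraction. Because $[x^n]\,\cdot$ is linear, I would push it through the finite sum of Equation (\ref{eq:symmetric}) over all tuples $(j_1,\dots,j_t)$ with $j_1+2j_2+\dots+tj_t=t$, pulling out the scalar $\frac{1}{\prod_{k=1}^t(k^{j_k}j_k!)}$. This reduces the task to computing $[x^n]\prod_{k=1}^t g(x^k)^{j_k}$ for each fixed tuple.

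Next I would invoke the Cauchy-product/discrete-convolution correspondence recalled in Definitions \ref{Def:cauchy_product} and \ref{Def:discrete_convolution}: the $n$-th coefficient of a product of power series equals the convolution of their coefficient sequences. Applying this to the factors $g(x^k)^{j_k}$, I would treat the two cases of $j_k$ separately. For indices $k$ with $j_k=0$ the factor is $g(x^k)^0=1$, whose coefficient sequence $1,0,0,\dots$ is the neutral element of the convolution, so these factors may be dropped without changing the convolution; this is precisely why the product $\Conv$ in the statement ranges only over $k$ with $j_k\neq 0$. For the remaining factors, Proposition \ref{prop:g^a^b} supplies the coefficient sequence of $g(x^k)^{j_k}$ as $\binom{(\frac{n}{k}-1)\cdot\mathbbm{1}_{\{n\equiv 0 \bmod k\}}}{2j_k-1}$, and the cosmetic rewriting $\frac{n}{k}-1=\frac{n-k}{k}$ matches the argument appearing in the theorem. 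Substituting these sequences into the convolution and reinserting the scalar yields the claimed formula.

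I do not expect a genuine obstacle here; the only points requiring care are that the convolution must be understood as acting on the coefficient sequences viewed as functions of $n$ with $k$ and $j_k$ held fixed (exactly as flagged in the statement), and that the elimination of the $j_k=0$ factors should be justified explicitly by the neutral-element property of Definition \ref{Def:discrete_convolution} rather than glossed over. Once these two observations are made, the identity follows directly from Equation (\ref{eq:symmetric}) and Proposition \ref{prop:g^a^b}.
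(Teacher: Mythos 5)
Your proposal is correct and follows essentially the same route as the paper: substitute the coefficient sequences from Proposition \ref{prop:g^a^b} into the cycle-index expansion of Equation (\ref{eq:symmetric}), discard the $j_k=0$ factors as convolution-neutral, and read off the $n$-th coefficient via the Cauchy-product/convolution correspondence. The only difference is cosmetic (you apply coefficient extraction before forming the convolutions, and you are slightly more explicit about the neutral-element justification, which the paper states in one sentence), so there is nothing substantive to add.
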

\begin{proof}
Plugging the expression from Proposition \ref{prop:g^a^b} into Equation (\ref{eq:recursion}) we get

\begin{equation}
    f^{\neg n,\neg v}(x;t,1)=\sum_{j_1+2j_2+3j_3+\dots+tj_t=t}\frac{1}{\prod_{k=1}^t(k^{j_k}j_k!)}\prod_{\substack{k=1\\
                  j_k\neq 0}}^t \left( \sum_{n\geq 0}\binom{(\frac{n}{k}-1)\cdot\mathbbm{1}_{\{n\equiv 0 \mod{k}\}}}{2j_k-1}x^n \right).
\end{equation}
Note that we are excluding the terms $g(x^k)^0$ of the product since it is equivalent to multiply by $1$. Then, the Cauchy product of these series become

\begin{equation*}    
   f^{\neg n,\neg v}(x;t,1)=\sum_{j_1+2j_2+3j_3+\dots+tj_t=t}\frac{1}{\prod_{k=1}^t(k^{j_k}j_k!)}\left(\sum_{n\geq 0}\left(\Conv_{\substack{k=1\\
                  j_k\neq 0}}^t\binom{\frac{n-k}{k}\cdot\mathbbm{1}_{\{n\equiv 0 \mod{k}\}}}{2j_k-1}\right)x^n\right),
\end{equation*}
so, indeed, coefficient by coefficient we get
 \begin{equation*}    
    SG^{\neg v,\neg n}(n,t,1)=\sum_{j_1+2j_2+3j_3+\dots+tj_t=t}\frac{1}{\prod_{k=1}^t(k^{j_k}j_k!)}\Conv_{\substack{k=1\\
                  j_k\neq 0}}^t\binom{\frac{n-k}{k}\cdot\mathbbm{1}_{\{n\equiv 0 \mod{k}\}}}{2j_k-1}
\end{equation*}
as expected.
\end{proof}
We are using the convolution to simplify the notation but would be equivalent to represent it as a succession of summation as the Cauchy product does. Form the recursive formula of the symmetric group (Equation (\ref{eq:recursion})) we get the following corollary.

\begin{corollary}\label{cor:SG_neq_v_n}
 The number of non-isomorphic simple games with one minimal winning vector and neither veto nor null players is given by the following equation.

       \begin{equation*}
    SG^{\neg v,\neg n}(n,t,1)=\frac{1}{t}\sum_{l=1}^t \sum_{k=1}^{n} \left(\left(\left(\frac{k}{l}-1\right)\cdot\mathbbm{1}_{\{k\equiv 0 \mod{l}\}}\right)\cdot SG^{\neg v,\neg n}(n-k,t-l,1) \right)
\end{equation*}

\end{corollary}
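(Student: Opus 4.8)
The plan is to extract the coefficient of $x^n$ from both sides of the generating-function recursion in Equation (\ref{eq:recursion}). By the defining relation $f^{\neg n,\neg v}(x;t,1)=\sum_{n\in\mathbb{N}}SG^{\neg v,\neg n}(n,t,1)\,x^n$, the left-hand coefficient of $x^n$ is precisely $SG^{\neg v,\neg n}(n,t,1)$, so it suffices to expand the right-hand side term by term in $x$.

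First I would specialise Proposition \ref{prop:g^a^b} to $b=1$ in order to read off the coefficients of each factor $g(x^l)$. Since $2b-1=1$ and $\binom{a}{1}=a$, the coefficient of $x^k$ in $g(x^l)$ equals $\left(\tfrac{k}{l}-1\right)\cdot\mathbbm{1}_{\{k\equiv 0 \mod{l}\}}$. Applying the Cauchy product of Definition \ref{Def:cauchy_product} to each summand $g(x^l)\cdot f^{\neg n,\neg v}(x;t-l,1)$ and keeping the prefactor $\tfrac1t$ together with the outer sum $\sum_{l=1}^t$ of Equation (\ref{eq:recursion}), the coefficient of $x^n$ on the right-hand side becomes
\[
\frac1t\sum_{l=1}^t\sum_{k=0}^n\left(\left(\tfrac{k}{l}-1\right)\cdot\mathbbm{1}_{\{k\equiv 0 \mod{l}\}}\right)\cdot SG^{\neg v,\neg n}(n-k,t-l,1).
\]
This already matches the claimed recursion except for the lower index of the inner sum.

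The step needing the most care, and the one I expect to be the main obstacle, is justifying that the inner summation may start at $k=1$. The issue is purely a matter of the binomial convention: throughout we use $\binom{m}{r}=0$ whenever $m<r$ (so that, as noted in Proposition \ref{prop:integer_sol}, $\binom{-1}{1}=0$), whereas the naive identity $\binom{a}{1}=a$ used above is only valid when $a\ge 0$. For $k=0$ we have $0\equiv 0\pmod l$ and $\tfrac{0}{l}-1=-1$, so the unwrapped expression would spuriously contribute $-1\cdot SG^{\neg v,\neg n}(n,t-l,1)$, whereas the true coefficient of $x^0$ in $g(x^l)=x^{2l}+2x^{3l}+\cdots$ is $0$. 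I would therefore check that for every $k\ge 1$ with $l\mid k$ one has $\tfrac{k}{l}-1\ge 0$ — the smallest such $k$ being $k=l$, giving the value $0$ — so that on the range $k\ge1$ the simplification $\binom{\tfrac{k}{l}-1}{1}=\tfrac{k}{l}-1$ is legitimate and the expression faithfully reproduces the coefficients of $g(x^l)$. Discarding the erroneous $k=0$ term and restricting the inner sum to $k\ge 1$ then yields exactly the stated formula.
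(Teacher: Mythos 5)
Your proposal is correct and takes essentially the same route as the paper: extract the coefficient of $x^n$ from Equation (\ref{eq:recursion}) via the Cauchy product, specialize Proposition \ref{prop:g^a^b} to $b=1$, and drop the $k=0$ term of the convolution (the paper likewise notes the binomial vanishes there, so the sum may start at $k=1$). Your added care about the convention $\binom{-1}{1}=0$ versus the unwrapped value $\frac{0}{l}-1=-1$ is precisely the subtlety the paper's brief remark is addressing.
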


For the proof, we just need to point out that, to multiply the two generating functions from Equation (\ref{eq:recursion}) we get a discrete convolution. Furthermore, the first of these two generating functions that we need to multiply for each $l$ corresponds to the one in Proposition \ref{prop:g^a^b} with $b=1$. Note that this binomial is $0$ when $n=0$, since it is outside the bounds where the binomial takes non zero values, so we can exclude this term from the summation. Additionally, the upper part of the binomial is always a positive integer since the characteristic function takes value zero when the fraction is not a positive integer. To simplify the calculations, we will denote this kind of fraction (the ones that end up being zero) by $\triangle$.

\begin{example}(Revisiting Example \ref{ex:9players}).
We will again compute the number of simple games with $9$ players and $3$ classes of players without nulls or vetoers using Corollary \ref{cor:SG_neq_v_n}. We start with

       \begin{equation*}
    SG^{\neg v,\neg n}(9,3,1)=\frac{1}{3}\sum_{l=1}^3 \sum_{k=1}^{9} \left(\left(\left(\frac{k}{l}-1\right)\cdot\mathbbm{1}_{\{k\equiv 0 \mod{l}\}}\right)\cdot SG^{\neg v,\neg n}(9-k,3-l,1) \right).
    \end{equation*}
Recall $SG^{\neg v,\neg n}(n,0,1)$ is $0$ if $n>0$ and $1$ for $n=0$. Also, $SG^{\neg v,\neg n}(n,1,1)=n-1$. Hence, first we need to compute $SG^{\neg v,\neg n}(n,2,1)$ for $n=0,\dots,8$. For example,  

       \begin{align*}
    SG^{\neg v,\neg n}(5,2,1)&=\frac{1}{2}\sum_{l=1}^2 \sum_{k=1}^{5} \left(\left(\left(\frac{k}{l}-1\right)\cdot\mathbbm{1}_{\{k\equiv 0 \mod{l}\}}\right)\cdot SG^{\neg v,\neg n}(5-k,2-l,1) \right)\\
    &=\frac{1}{2}( ((0\cdot 1)\cdot 3)+((1\cdot 1)\cdot 2)+((2\cdot 1)\cdot 1)+((3\cdot 1)\cdot 0)+((4\cdot 1)\cdot 0))+\\
    &+((\triangle\cdot 0)\cdot 0)+((0\cdot 1)\cdot 0)+((\triangle\cdot 0)\cdot 0)+((1\cdot 1)\cdot 0)+((\triangle\cdot 0)\cdot 1)) \\
    &=\frac{1}{2}(2+2)=2
\end{align*}

and repeating the process from the remaining values, we obtain Table \ref{table_SG_neq_v_n_5_2}.

\begin{table}[!h]
\begin{center}
\begin{tabular}{ | c | c | c | c |c | c | c | c |c | c | } 

  \hline
  n & 0 & 1 & 2 & 3 & 4 & 5 & 6 & 7 & 8   \\ 
  \hline
  $SG^{\neg v,\neg n}(n,2,1)$ &0 & 0 & 0 & 0 & 1 & 2 & 6 & 10 & 19    \\ 
  \hline
  \end{tabular}
\caption{Values of $SG^{\neg v,\neg n}(n,2,1)$ for $n=0,\dots,8$. \label{table_SG_neq_v_n_5_2}}
\end{center}
\end{table}
So we compute, 
       \begin{align*}
    SG^{\neg v,\neg n}(9,3,1)&=\frac{1}{3}\sum_{l=1}^3 \sum_{k=1}^{9} \left(\left(\left(\frac{k}{l}-1\right)\cdot\mathbbm{1}_{\{k\equiv 0 \mod{l}\}}\right)\cdot SG^{\neg v,\neg n}(9-k,3-l,1) \right)\\
    &=\frac{1}{3}(
    ((0\cdot 1)\cdot 19)+((1\cdot 1)\cdot 10)+((2\cdot 1)\cdot 6)+((3\cdot 1)\cdot 2)+((4\cdot 1)\cdot 1)+((5\cdot 1)\cdot 0)\\&+((6\cdot 1)\cdot 0)+((7\cdot 1)\cdot 0)+((8\cdot 1)\cdot 0)\\&+ ((\triangle\cdot 0)\cdot 7)+((0\cdot 1)\cdot 6)+((\triangle\cdot 0)\cdot 5)+((1\cdot 1)\cdot 4)+((\triangle\cdot 0)\cdot 3)+((2\cdot 1)\cdot 2)\\&+((\triangle\cdot 0)\cdot 1)+((3\cdot 1)\cdot 0)+((\triangle\cdot 0)\cdot 0)\\&+((\triangle\cdot 0)\cdot 0)+((\triangle\cdot 0)\cdot 0)+((0\cdot 1)\cdot 0)+((\triangle\cdot 0)\cdot 0)+((\triangle\cdot 0)\cdot 0)+((1\cdot 1)\cdot 0)\\&+((\triangle\cdot 0)\cdot 0)+((\triangle\cdot 0)\cdot 0)+((2\cdot 1)\cdot 1))\\
    &=\frac{1}{3}(10+12+6+4+4+4+2)=14
    \end{align*}
obtaining the expected value.
\end{example}

Next result considers all the possible ways to add veto and null players into a game without them, in order to obtain all simple games with minimal for a given $n$ and $t$.

\begin{theorem}\label{thm:enum} For $t> 2$,
    \begin{equation*}
        SG(n,t,1)=SG^{\neg v, \neg n}(n,t,1)+2\sum_{i=1}^{n-2} SG^{\neg v, \neg n}(n-i,t-1,1)+\sum_{i=2}^{n-2}(i-1)\cdot SG^{\neg v, \neg n}(n-i,t-2,1)
    \end{equation*}

    For $t=2$
    \begin{equation*}
        SG(n,2,1)=SG^{\neg v, \neg n}(n,2,1)+2\sum_{i=1}^{n-2} SG^{\neg v, \neg n}(n-i,1,1)+n-1.
    \end{equation*}
\end{theorem}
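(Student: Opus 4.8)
The plan is to partition the isomorphism classes counted by $SG(n,t,1)$ according to which special types of players occur and then reduce each part to the already-enumerated quantity $SG^{\neg v,\neg n}$. Recall from the discussion preceding the theorem that, for a game with a single minimal winning vector $m=(m_1,\dots,m_t)$, the players of class $N_i$ are null precisely when $m_i=0$ and veto precisely when $m_i=n_i$. Since all null players are mutually equi-desirable and all veto players are mutually equi-desirable, a game with minimum has at most one null class and at most one veto class, and a null class cannot also be a veto class because $m_i=0=n_i$ is impossible for $n_i>0$. As any isomorphism maps null players to null players and veto players to veto players, ``has a null class'' and ``has a veto class'' are isomorphism invariants. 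Hence the games counted by $SG(n,t,1)$ split into four disjoint families: (i) neither a null nor a veto class; (ii) a null class but no veto class; (iii) a veto class but no null class; (iv) both a null and a veto class.

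The key step is to count each family by peeling off the special classes. Family (i) is $SG^{\neg v,\neg n}(n,t,1)$ by definition. For family (ii), I would fix the size $i\ge 1$ of the unique null class and delete it: the remaining $t-1$ classes, restricted to the $n-i$ non-null players, form a game with one minimal winning vector and neither null nor veto class, i.e.\ an object counted by $SG^{\neg v,\neg n}(n-i,t-1,1)$. Conversely, attaching a null class of size $i$ to such a game lands in family (ii), and two games in family (ii) are isomorphic iff their null classes have equal size and their regular parts are isomorphic; this bijection gives $\sum_i SG^{\neg v,\neg n}(n-i,t-1,1)$. Family (iii) is counted identically with a veto class replacing the null class, yielding the same sum, which together with (ii) produces the factor $2$ in the middle term. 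For family (iv) I would delete both special classes: writing $a\ge 1$, $b\ge 1$ for the sizes of the null and veto classes and $i=a+b$ for their total, there are $i-1$ admissible splits, and the $t-2$ surviving classes form a game counted by $SG^{\neg v,\neg n}(n-i,t-2,1)$, giving $\sum_i (i-1)\,SG^{\neg v,\neg n}(n-i,t-2,1)$.

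It remains to pin down the summation ranges, which is where the two cases diverge. A regular part with $s\ge 1$ classes needs at least $2s$ players, so $SG^{\neg v,\neg n}(n-i,s,1)=0$ whenever $n-i<2s$; for $t>2$ every surviving regular part has at least one class, so all nonzero contributions of families (ii)/(iii) lie in $1\le i\le n-2$ and those of family (iv) in $2\le i\le n-2$, giving the first displayed identity after summing the four disjoint families. When $t=2$, however, family (iv) has no regular class: all $n$ players are null or veto, forcing $a+b=n$ with $a,b\ge 1$, which yields exactly $n-1$ isomorphism classes (indexed by the null-class size) rather than a term inside $\sum_{i=2}^{n-2}$; this explains the explicit summand $n-1$ and the separate statement for $t=2$. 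The main point to verify carefully is that the peeling operations are genuine isomorphism-respecting bijections — in particular that an isomorphism of the full game restricts to one of its regular part and preserves the sizes of the null and veto classes — together with the boundary bookkeeping for $t=2$, where the regular part degenerates to the empty game with the convention $SG^{\neg v,\neg n}(0,0,1)=1$.
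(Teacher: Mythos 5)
Your proposal is correct and takes essentially the same route as the paper's own proof: both partition the games counted by $SG(n,t,1)$ into the four disjoint families according to the presence of a null class and/or a veto class, peel off those classes to reduce to $SG^{\neg v,\neg n}$ counts (with the factor $2$ from the null/veto symmetry and the factor $i-1$ from splitting $i$ special players into two nonempty groups), and treat $t=2$ separately because the regular part then disappears, leaving exactly $n-1$ games. Your write-up is in fact somewhat more careful than the paper's, spelling out the isomorphism-invariance of the special classes and the bijection underlying the peeling argument.
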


\begin{proof} \\
    Case $t>2$. A game with one minimal winning coalition and $n$ players and $t>2$ equivalent classes of players that can contain null or veto players comes from one of these four categories
    \begin{enumerate}
        \item It does not contain neither null nor veto players. There are $SG^{\neg v, \neg n}(n,t,1)$ of such non-isomorphic games.
        \item It contains $i$ null players for $i\in\{1,\dots,n-2\}$.  There are $\sum_{i=1}^{n-2} SG^{\neg v, \neg n}(n-i,t-1,1)$ non-equivalent games of this kind. Note the upper bound is $n-2$ since we have $t>2$ classes of equivalent players, and at most there are $n-2$ null players in one class and, at least one player in each of the $t-1\geq 2$ remaining classes.
        \item It contains $i$ veto players for $i\in\{1,\dots,n-2\}$. Analogously as in $b)$, there are $\sum_{i=1}^{n-2} SG^{\neg v, \neg n}(n-i,t-1,1)$ non-equivalent games.
        \item It contains a total number of $i$ ``nulls plus veto players", with at least one of each class. There are $i-1$ ways to make the choice. Once it is done, $n-i$ players remain into $t-2$ classes, so there are $\sum_{i=2}^{n-2}(i-1)\cdot SG^{\neg v, \neg n}(n-i,t-2,1)$ non-isomorphic games of this type.  
    \end{enumerate}
Case $t=2$. The arguments in $a)$, $b)$ and $c)$ are still valid. Regarding $d)$ we just need to count how many games with $n$ players with only veto and null players there are, with at least one of each kind, which are $n-1$.
    
\end{proof}

Note that $SG(n,2,1)$ coincides with Corollary $5.4$ of \cite{FrSa21DAM}. 
\begin{example}

Additionally in terms of generating functions, $f(x;t,1)$, we have
\begin{eqnarray*}
  f(x;2,1)&=&\frac{x^2\cdot\left(1+2x+x^2-2x^3\right)}{(1-x)^2(1-x^2)^2}\\ 
  &=& \frac{1}{2}\cdot\frac{x^4}{(1-x)^4}+\frac{1}{2}\cdot \frac{x^4}{(1-x^2)^2}+\frac{2x^3}{(1-x)^3}+\frac{x^2}{(1-x)^2},\\
  f(x;3,1)&=&\frac{x^4\cdot\left(1+4x+4x^2+2x^3-3x^4-2x^6\right)}{(1-x)^2(1-x^2)^2(1-x^3)^2}, \\
   f(x;4,1)&=&{\tiny\frac{x^6\cdot\left(1+4x+7x^2+8x^3+11x^4+6x^5+3x^6-2x^7-7x^8-6x^9-2x^{11}+x^{12}\right)}{(1-x)^2(1-x^2)^2(1-x^3)^2(1-x^4)^2},}
\end{eqnarray*}

\end{example}

Since Proposition \ref{prop:bound} proposes bounds on $t$, we have the following result.

\begin{theorem}

\begin{equation*}
    SG^{\neg v,\neg n}(n,1)=\sum_{i=1}^{\lfloor\frac{n}{2}\rfloor} SG^{\neg v,\neg n}(n,i,1)
\end{equation*}
and
\begin{equation*}
    SG(n,1)=\sum_{i=1}^{\lfloor\frac{n}{2}\rfloor+1} SG(n,i,1)
\end{equation*}
    
\end{theorem}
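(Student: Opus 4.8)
The plan is to view both identities as a partition of the relevant set of non-isomorphic games according to the value of $t$, the number of equivalence classes of players. The key observation is that $t$ is an isomorphism invariant: since an isomorphism $\varphi\colon N\to N'$ preserves the desirability preorder $\succsim$ (because it preserves winning coalitions, hence the sets $W_a$ up to the relabeling $\tau_{ab}$), it descends to a bijection between the equi-desirability classes of the two games. Thus two isomorphic simple games with minimum have the same number of classes $t$, and the collection of all non-isomorphic games with $n$ players and one minimal winning vector is the disjoint union, over admissible $t$, of those having exactly $t$ equivalence classes. By Definition \ref{def:sg_with_sym} and the surrounding remarks, the number of the latter is exactly $SG^{\neg v,\neg n}(n,t,1)$ in the null/veto-free case and $SG(n,t,1)$ in general, so the totals $SG^{\neg v,\neg n}(n,1)$ and $SG(n,1)$ are the corresponding sums over $t$.

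First I would fix the lower bound $t\ge 1$, which is immediate: every game on $n\ge 1$ players has at least one equivalence class, and in the null/veto-free case $n\ge 2$ is forced (each class needs at least two players, by the inequalities in Equation (\ref{eq:ineq})). Next I would invoke Proposition \ref{prop:bound} directly to supply the upper limits of summation. Part (a) gives $t\le\left\lfloor\tfrac{n}{2}\right\rfloor$ for games with neither null nor veto players, so $SG^{\neg v,\neg n}(n,i,1)=0$ for every $i>\left\lfloor\tfrac{n}{2}\right\rfloor$; part (b) gives $t\le\left\lfloor\tfrac{n}{2}+1\right\rfloor$ in general, so $SG(n,i,1)=0$ for every $i>\left\lfloor\tfrac{n}{2}+1\right\rfloor$. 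Hence each infinite sum over all $i\ge 1$ truncates precisely at the stated bound, yielding the two displayed formulas.

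Finally I would note that the sums are well defined and finite because each summand counts a finite (possibly empty) set of non-isomorphic games, and the partition by $t$ is exhaustive within the admissible range: the closing remark in the proof of Proposition \ref{prop:bound} exhibits, for every $\overline{n}$ realizing a given $t$, at least one valid game (taking $m_i=1$ on each non-null class), so no intermediate value of $t$ is vacuously excluded. I do not expect a genuine obstacle here; the only point requiring a word of care is the invariance of $t$ under isomorphism, which underpins the disjointness of the partition, and the correct reading of the bounds supplied by Proposition \ref{prop:bound} as the exact cutoffs of the summation.
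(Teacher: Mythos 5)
Your proposal is correct and follows essentially the same route as the paper, whose entire proof is to sum over $t$ and invoke Proposition \ref{prop:bound} for the truncation of the sums. The extra care you take (isomorphism-invariance of $t$ to justify the disjoint partition, and the realizability of each admissible $t$) is a sound elaboration of details the paper leaves implicit, not a different argument.
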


\begin{proof}
    From Proposition \ref{prop:bound} we just need to sum the terms satisfying the bounds.
\end{proof}

In Table~\ref{table_sg_with_minimum} we list the values of $\SG(n,t,1)$ for all $n\le 20$. Except for $\SG(n,1,1)=n$ none of the occurring
integer sequence is contained in the OEIS. 
    
\begin{table}[H]
  \begin{center}
    \begin{tabular}{rrrrrrrrrrrrr}
      \hline
      n/t & 1 &   2 &    3 &     4 &     5 &     6 &    7 &   8 &   9 & 10 & 11 & $SG(n,1)$ \\ 
      \hline
       1 &  1 &     &      &       &       &       &      &     &     &    &   &     1 \\ 
       2 &  2 &   1 &      &       &       &       &      &     &     &    &   &     3 \\
       3 &  3 &   4 &      &       &       &       &      &     &     &    &   &     7 \\
       4 &  4 &  10 &    1 &       &       &       &      &     &     &    &   &    15 \\
       5 &  5 &  18 &    6 &       &       &       &      &     &     &    &   &    29 \\
       6 &  6 &  31 &   17 &     1 &       &       &      &     &     &    &   &    55 \\
       7 &  7 &  46 &   40 &     6 &       &       &      &     &     &    &   &    99 \\
       8 &  8 &  68 &   79 &    20 &     1 &       &      &     &     &    &   &   176 \\
       9 &  9 &  92 &  146 &    52 &     6 &       &      &     &     &    &   &   305 \\
      10 & 10 & 125 &  244 &   122 &    20 &     1 &      &     &     &    &   &   522 \\
      11 & 11 & 160 &  392 &   252 &    56 &     6 &      &     &     &    &   &   877 \\
      12 & 12 & 206 &  598 &   485 &   139 &    20 &    1 &     &     &    &   &  1461 \\
      13 & 13 & 254 &  882 &   872 &   316 &    56 &    6 &     &     &    &   &  2399 \\
      14 & 14 & 315 & 1258 &  1494 &   659 &   144 &   20 &   1 &     &    &   &  3905 \\
      15 & 15 & 378 & 1756 &  2444 &  1298 &   338 &   56 &   6 &     &    &   &  6291 \\
      16 & 16 & 456 & 2387 &  3871 &  2416 &   744 &  144 &  20 &   1 &    &   & 10055 \\
      17 & 17 & 536 & 3192 &  5924 &  4314 &  1540 &  344 &  56 &   6 &    &   & 15929 \\ 
      18 & 18 & 633 & 4191 &  8844 &  7399 &  3042 &  771 & 144 &  20 &  1 &   & 25063 \\
      19 & 19 & 732 & 5424 & 12878 & 12286 &  5748 & 1646 & 344 &  56 &  6 &   & 39139 \\
      20 & 20 & 850 & 6921 & 18387 & 19791 & 10478 & 3352 & 778 & 144 & 20 & 1 & 60742 \\
      \hline
    \end{tabular}
    \caption{$\SG(n,t,1)$ for small parameters and its sum $SG(n,1)$ on the last column.}
    \label{table_sg_with_minimum}
  \end{center}
\end{table}

\section{Dimension of simple games with minimum}\label{S:dim}

When considering only one minimal winning vector, the concept of dimension becomes straightforward to compute. In this section, we show, for this context, the relation between the dimension and the number of equivalent classes of players $t$.

Recall a simple game $(N,v)$ is said to be \emph{weighted} if there exists a function $w:N\rightarrow\mathbb{R}$ and a quota $q\in\mathbb{R}$ that assigns to each player $i$ a \emph{weight} $w_i$ such that 
 \begin{equation*}
     \sum_{i\in W}w_i \geq q > \sum_{i\in L}w_i
 \end{equation*}
 for any $W\in\mathcal{W}^m$ and $L\in\mathcal{L}^M$.

 Additionally, given two simple games involving the same set of players, $(N,v_1)$ and $(N,v_2)$, we define, as usual, the game \emph{intersection} $(N,v)$ as $v(S)=1 \iff v_{1}(S)=1\text{ and }v_{2}(S)=1$. We say that a game $(N,v)$ has dimension $k$ if it can be represented as the intersection of $k$ weighted games but not as the intersection of $k-1$ weighted games.

\begin{theorem}
  Let $\overline{n}=\left(n_1,\dots,n_t\right)\in\N_{>0}^t$ and $\overline{m}=\left(m_1,\dots,m_t\right)\in\N^t$ uniquely characterize a simple game $v$ with minimum. If $v$ does neither contain
  null players nor veto players, then $\dim(v)=t$.
\end{theorem}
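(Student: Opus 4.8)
The plan is to prove the two inequalities $\dim(v)\le t$ and $\dim(v)\ge t$ separately, after first recording the combinatorial description of $v$. Since $v$ has no null and no veto players we have $1\le m_i\le n_i-1$ for every $i$, and because $m$ is the unique minimal winning vector, a coalition $S$ with coalition vector $\overline{s}=(s_1,\dots,s_t)$ is winning if and only if $s_i\ge m_i$ for all $i$. For the upper bound I would exhibit an explicit representation: for each $i$ let $u_i$ be the game in which players of $N_i$ receive weight $1$, all others weight $0$, and the quota is $m_i$. Each $u_i$ is a weighted simple game (it is monotone, $u_i(\emptyset)=0$ since $m_i\ge 1$, and $u_i(N)=1$ since $n_i\ge m_i$), and $\bigcap_{i=1}^t u_i=v$ because $v(S)=1$ is exactly the conjunction of the $t$ threshold conditions $s_i\ge m_i$. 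Hence $\dim(v)\le t$.

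For the lower bound I would first identify the maximal losing coalitions. A short argument shows that a losing vector is maximal exactly when it has a single deficient coordinate: the maximal losing vectors are precisely $\overline{\ell}^{(i)}=(n_1,\dots,n_{i-1},m_i-1,n_{i+1},\dots,n_t)$ for $i=1,\dots,t$. Fix representatives $L^{(i)}=N\setminus D_i$ with $D_i\subseteq N_i$ and $|D_i|=n_i-m_i+1$; the hypothesis $m_i\le n_i-1$ gives the crucial fact $|D_i|\ge 2$. Now suppose $v=\bigcap_{j=1}^k w_j$ with each $w_j$ weighted. Each $L^{(i)}$ is losing in $v$, hence losing in at least one $w_j$, which defines a map $\sigma(i)$ sending $i$ to such an index. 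I would show $\sigma$ is injective, giving $k\ge t$; combined with the upper bound this yields $\dim(v)=t$.

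The crux, and the step I expect to be the main obstacle, is proving injectivity of $\sigma$, i.e.\ that no single weighted game $w$ with $v\subseteq w$ can make two distinct maximal losing coalitions $L^{(i)},L^{(i')}$ ($i\neq i'$) losing. Here I would invoke trade robustness of weighted games (\cite{TaZw1992}): it suffices to build a trade, that is, two winning coalitions of $v$ whose multiset of players equals that of $L^{(i)}\cup L^{(i')}$. Using $|D_i|\ge 2$ and $|D_{i'}|\ge 2$, I split $D_i=D_i^{(1)}\sqcup D_i^{(2)}$ and $D_{i'}=D_{i'}^{(1)}\sqcup D_{i'}^{(2)}$ into nonempty parts and set
\begin{equation*}
 W_1=\bigl(N\setminus(D_i\cup D_{i'})\bigr)\cup D_i^{(1)}\cup D_{i'}^{(1)},\qquad
 W_2=\bigl(N\setminus(D_i\cup D_{i'})\bigr)\cup D_i^{(2)}\cup D_{i'}^{(2)}.
\end{equation*}
Since each deficient class contributes $m_i-1$ common players plus at least one split player to each of $W_1,W_2$, both coalitions have every coordinate at least $m$, so both are winning in $v$ and hence in $w$. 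On the other hand $W_1\cup W_2=N$ and $W_1\cap W_2=N\setminus(D_i\cup D_{i'})$, so the player-multiset of $\{W_1,W_2\}$ coincides with that of $\{L^{(i)},L^{(i')}\}$. Summing the weights then forces $2q\le \alpha(W_1)+\alpha(W_2)=\alpha(L^{(i)})+\alpha(L^{(i')})<2q$, a contradiction. This is exactly where both hypotheses enter: $m_i\ge 1$ makes the threshold games nontrivial and the vectors $\overline{\ell}^{(i)}$ admissible, while $m_i\le n_i-1$ guarantees $|D_i|\ge 2$ and thus the existence of the splitting that makes the trade go through.
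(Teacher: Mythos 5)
Your proposal is correct and follows essentially the same route as the paper: the upper bound via the $t$ per-class threshold games $[m_i;\,\mathbf{1}_{N_i}]$, and the lower bound via the same losing coalitions $L^{(i)}$ (full in every class except $m_i-1$ players in $N_i$) together with a $2$-trade weight-sum contradiction showing no single weighted game containing $v$'s winning coalitions can have two of them losing. The only cosmetic difference is the form of the trade — you redistribute the deficiency sets $D_i, D_{i'}$ into two winning coalitions, while the paper swaps a single pair of players $a\in N_j$, $b\in N_i$ — but both trades use the no-veto hypothesis $m_i\le n_i-1$ in exactly the same way and yield the same conclusion.
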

\begin{proof}
  Since $v$ does not contain veto or null players, we have $1\le m_i\le n_i-1$ for all $1\le i\le t$. Let $N_i$ denote the set of players corresponding to $n_i$. For each $1\le i\le t$ let $v^i$ denote the weighted game with quota $m_i$ where the players in $N_i$ have weight $1$ and all others have weight $0$. With this we have $v=\cap_{1\le i\le t} v^i$, so that $\dim(v)\le t$. 
  
  For the other direction we can assume $t\ge 2$. 
  For each index $1\le i\le t$ let $L_i\subseteq N$ such that $L_j=N_j$ for all $1\le j\le n$ with $j\neq i$ and $\left|L_i\cap N_i\right|=m_i-1$, so that $L_i$ is a losing coalition. For each 
  $1\le i<j\le t$ let $a\in \left(L_i\backslash L_j\right) \cap N_j$ and $b\in \left(L_j\backslash L_i\right) \cap N_i$. With this, $L_i\backslash \{a\}\cup\{b\}$ and $L_j\backslash \{b\}\cup\{a\}$ 
  are winning coalitions. Thus, $L_i$ and $L_j$ cannot both be losing coalitions in a weighted game where all winning coalitions of $v$ are winning coalitions, so that $\dim(v)\ge t$.         
\end{proof}

\begin{corollary}
  Let $\overline{n}=\left(n_1,\dots,n_t\right)\in\N_{>0}^t$ and $\overline{m}=\left(m_1,\dots,m_t\right)\in\N^t$ uniquely characterize a simple game $v$ with minimum. If $v$ does not contain
  veto players but contains null players, then we have $\dim(v)=t-1$.
\end{corollary}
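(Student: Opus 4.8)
The plan is to reduce the statement to the preceding theorem by passing to the game on the non-null players. First I would record a structural fact: since all null players are equi-desirable they form a single equivalence class, so the hypothesis that $v$ contains null players forces exactly one class, say $N_\ell$, to be null, i.e.\ $m_\ell=0$. Because $v$ has no veto players we also have $m_i<n_i$ for every $i$, so the remaining $t-1$ classes satisfy $1\le m_i\le n_i-1$. In particular $t\ge 2$, since a single class cannot be null (its minimal winning vector would be $(0)$, contradicting $v(\emptyset)=0$, and indeed the preliminaries note that null players are impossible with one class).

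Next I would introduce the \emph{reduced game} $v'$ obtained by deleting the null class $N_\ell$: it is the simple game with minimum on the player set $N'=N\setminus N_\ell$, with $t-1$ equivalence classes and minimal winning vector $(m_i)_{i\ne\ell}$. By construction $v'$ contains neither null nor veto players, so the previous theorem gives $\dim(v')=t-1$. The whole corollary then follows from the claim $\dim(v)=\dim(v')$, which I would establish by two transfers of weighted representations.

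For $\dim(v)\le\dim(v')$ I would start from a representation $v'=u_1'\cap\dots\cap u_{t-1}'$ by weighted games on $N'$ and extend each $u_m'$ to $N$ by assigning weight $0$ to every null player while keeping the same quota. The one thing that needs checking is that $m_\ell=0$ makes the null coordinate vacuous: a coalition $S\subseteq N$ is winning in $v$ if and only if $S\cap N'$ is winning in $v'$, and since null players carry weight $0$ the extended games satisfy $v=u_1\cap\dots\cap u_{t-1}$. Conversely, for $\dim(v)\ge\dim(v')$ I would take any representation $v=u_1\cap\dots\cap u_k$ and restrict each $u_m$ to $N'$; a restriction of a weighted game is weighted, and the same vacuous-coordinate identity shows that on coalitions $S'\subseteq N'$ the intersection of the restricted games coincides with $v'$, whence $\dim(v')\le k$. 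Combining the two inequalities yields $\dim(v)=\dim(v')=t-1$.

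The step I expect to require the most care is not any inequality in isolation but the bookkeeping in these transfers: one must verify that giving null players weight $0$, respectively deleting them, genuinely preserves the winning/losing partition on both sides, i.e.\ that the null coordinate never participates in any threshold comparison precisely because its threshold is $m_\ell=0$. Equivalently, one can bypass the theorem and argue the lower bound directly, mimicking its proof: the losing coalitions $L_i$, for the $t-1$ non-null indices $i$, built by taking all other classes in full and only $m_i-1$ players of $N_i$, admit for each pair $i\ne j$ a one-element trade making both winning, so no weighted game covering $v$ can turn two of them losing simultaneously; since there are only $t-1$ such $L_i$ — the null class contributes none, as $m_\ell-1=-1$ is not a valid coalition size — at least $t-1$ weighted games are needed. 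This missing obstruction is exactly where the dimension drops from $t$ to $t-1$ relative to the theorem.
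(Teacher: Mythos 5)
Your proof is correct and matches the paper's intended argument: the paper states this result as an immediate corollary of the preceding theorem with no written proof, and the implicit reasoning is exactly your reduction — the null players necessarily form a single class with $m_\ell=0$, deleting them (or re-attaching them with weight $0$ and unchanged quotas) preserves both weightedness and the intersection structure, so $\dim(v)=\dim(v')$ where $v'$ is the game on the $t-1$ remaining classes, which has neither null nor veto players and hence dimension $t-1$ by the theorem. Your alternative direct lower bound via the coalitions $L_i$ for the $t-1$ non-null classes is also a faithful adaptation of the theorem's trading argument, though it is not needed once the reduction is in place.
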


\begin{theorem}
  Let $\overline{n}=\left(n_1,\dots,n_t\right)\in\N_{>0}^t$ and $\overline{m}=\left(m_1,\dots,m_t\right)\in\N^t$ uniquely characterize a simple game $v$ with minimum. If $v$ contains
  veto players but does not contain null players, then we have $\dim(v)=\max\{t-1,1\}$.
\end{theorem}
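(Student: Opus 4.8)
The plan is to mirror the structure of the two preceding dimension results: pin down the combinatorial data, then prove matching upper and lower bounds, treating $t=1$ as a degenerate case. First I would record the structural facts. Recall that $N_i$ consists of veto players exactly when $m_i=n_i$ and of null players exactly when $m_i=0$; since null players are excluded every class has $m_i\ge 1$, and since $v$ has veto players at least one class has $m_i=n_i$. I claim there is exactly one such class: if $a\in N_i$ and $b\in N_j$ were both veto players, then every winning coalition contains both, so $W_a=W_b=\mathcal{W}$ and $\tau_{ab}$ fixes every winning coalition, whence $a\approx b$ and $i=j$. Call this unique veto class $N_c$, so $m_c=n_c$ and $1\le m_i\le n_i-1$ for all $i\ne c$. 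If $t=1$ then $N_c=N$ and $v$ is the unanimity game, which is weighted, so $\dim(v)=1=\max\{t-1,1\}$; from now on I assume $t\ge 2$, where $\max\{t-1,1\}=t-1$.

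For the upper bound $\dim(v)\le t-1$ I would fold the veto constraint into the weighted game of one non-veto class. Fix a non-veto class $N_j$ (which exists since $t\ge 2$) and define a weighted game $w$ giving each player of $N_c$ weight $n_j$, each player of $N_j$ weight $1$, all others weight $0$, and quota $n_j n_c+m_j$. A short check shows $w(S)\ge n_j n_c+m_j$ holds precisely when $N_c\subseteq S$ and $|S\cap N_j|\ge m_j$: if $N_c\subseteq S$ the weight is $n_j n_c+|S\cap N_j|$, while if $|S\cap N_c|\le n_c-1$ the weight is at most $n_j(n_c-1)+n_j=n_j n_c<n_j n_c+m_j$. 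Intersecting $w$ with the $t-2$ single-class weighted games $v^i$ (quota $m_i$, unit weights on $N_i$, zero elsewhere) for $i\ne c,j$, exactly as in the first theorem of this section, reproduces all $t$ constraints $|S\cap N_i|\ge m_i$, so $\dim(v)\le t-1$.

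For the lower bound $\dim(v)\ge t-1$ I would reuse the trade argument of the no-null-no-veto case, but restricted to the $t-1$ non-veto classes. For each $i\ne c$ let $L_i$ be full on every class except $N_i$, where $|L_i\cap N_i|=m_i-1$; each $L_i$ is losing. For distinct non-veto classes $i,j$ choose $a\in(L_i\setminus L_j)\cap N_j$ and $b\in(L_j\setminus L_i)\cap N_i$; then $L_i\setminus\{a\}\cup\{b\}$ and $L_j\setminus\{b\}\cup\{a\}$ are winning, the key point being that these modifications touch only classes $i,j$ and leave $N_c$ full, preserving the veto constraint. The identity $w(L_i)+w(L_j)=w(L_i\setminus\{a\}\cup\{b\})+w(L_j\setminus\{b\}\cup\{a\})$ then shows no weighted extension of $v$ can have both $L_i,L_j$ losing, and a pigeonhole over the $t-1$ coalitions $L_i$ forces $\dim(v)\ge t-1$ (for $t=2$ this degenerates to the trivial bound $\dim(v)\ge 1$). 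Combining the bounds gives $\dim(v)=t-1$ for $t\ge 2$, completing the formula.

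The step I expect to be the main obstacle is the upper bound: one must see that only a single unit of dimension can be saved, i.e. that the veto class can be absorbed into exactly one other weighted game, and then verify that the chosen weights and quota cut out precisely ``$N_c\subseteq S$ and $|S\cap N_j|\ge m_j$'' and nothing coarser. The lower bound is by contrast a routine adaptation, once one notices that the earlier trade construction fails exactly for pairs involving $N_c$ (removing a player from the veto class destroys winningness), which is the structural reason one loses a unit of dimension relative to the null-free, veto-free case.
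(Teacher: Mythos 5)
Your proof is correct, and its overall architecture --- the observation that all veto players form a single equivalence class, the $t=1$ unanimity case, an upper bound obtained by intersecting $t-1$ weighted games in which the veto class is folded into one non-veto class, and a lower bound via the trade/pigeonhole argument on losing coalitions $L_i$ attached to the non-veto classes --- matches the paper's proof. The one genuine difference is the upper-bound construction, and there your version is not merely different: it is the one that actually works. The paper takes, for each $2\le i\le t$, the game $v^i$ with quota $m_1+m_i$ and weight $1$ on every player of $N_1\cup N_i$, and claims $v=\cap_{2\le i\le t}v^i$. This fails, because a deficit in the veto class can be compensated by a surplus in $N_i$: for any $x\in N_1$ the coalition $S=N\setminus\{x\}$ satisfies $|S\cap N_1|+|S\cap N_i|=(n_1-1)+n_i\ge n_1+m_i$ for every $i\ge 2$ (using $m_i\le n_i-1$ for non-veto, non-null classes), so $S$ wins in every $v^i$ yet loses in $v$, missing a veto player. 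Your choice of weights --- $n_j$ on each player of the veto class $N_c$, $1$ on $N_j$, quota $n_jn_c+m_j$ --- makes even a single missing veto player irrecoverable, since the weight is then at most $n_j(n_c-1)+n_j=n_jn_c<n_jn_c+m_j$; your verification that $w$ cuts out exactly ``$N_c\subseteq S$ and $|S\cap N_j|\ge m_j$'' is precisely the step the paper's construction cannot pass. Your lower bound coincides with the paper's (the paper assumes $t\ge 3$ there, matching your remark that $t=2$ gives only the trivial bound $\dim(v)\ge 1$), and your argument that two veto players in different classes would be equi-desirable turns the paper's ``w.l.o.g.\ $m_1=n_1$'' into a proved fact rather than an assumption.
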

\begin{proof}
  Since $v$ does not contain null players, we have $1\le m_i\le n_i$ for all $1\le i\le t$. W.l.o.g.\ we assume $m_1=n_1$ and $m_i\le n_i-1$ for all $2\le i\le t$. Let $N_i$ denote the set of players corresponding to $n_i$. If $t=1$, then we have $v=[n_1;1,\dots,1]$, so that we assume $t\ge 2$ in the following. For each $2\le i\le t$ let $v^i$ denote the weighted game with quota $m_1+m_i$ where the players in $N_i\cup N_1$ have weight $1$ and all others have weight $0$. With this we have $v=\cap_{2\le i\le t} v^i$, so that $\dim(v)\le t-1$.
  
  For the other direction, we can assume $t\ge 3$. 
  For each index $2\le i\le t$ let $L_i\subseteq N$ such that $L_j=N_j$ for all $1\le j\le n$ with $j\neq i$ and $\left|L_i\cap N_i\right|=m_i-1$, so that $L_i$ is a losing coalition. For each 
  $1\le 2<j\le t$ let $a\in \left(L_i\backslash L_j\right) \cap N_j$ and $b\in \left(L_j\backslash L_i\right) \cap N_i$. With this, $L_i\backslash \{a\}\cup\{b\}$ and $L_j\backslash \{b\}\cup\{a\}$ 
  are winning coalitions. Thus, $L_i$ and $L_j$ cannot both be losing coalitions in a weighted game where all winning coalitions of $v$ are winning coalitions, so that $\dim(v)\ge t-1$. 
\end{proof}  

\begin{corollary}
  Let $\overline{n}=\left(n_1,\dots,n_t\right)\in\N_{>0}^t$ and $\overline{m}=\left(m_1,\dots,m_t\right)\in\N^t$ uniquely characterize a simple game $v$ with minimum. If $v$ contains
  both veto and null players, then we have $\dim(v)=\max\{t-2,1\}$.
\end{corollary}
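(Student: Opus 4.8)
The plan is to reduce the statement to the preceding theorem (veto but no null players) by observing that null players are dimensionally inert and can simply be deleted. First I would record some structural facts. Since all null players are mutually equi-desirable they form a single equivalence class, and likewise all veto players form a single class; as $m_i=0$ (null) and $m_i=n_i$ (veto) cannot both hold when $n_i>0$, the null class and the veto class are distinct. Because $v$ contains both kinds of players we therefore have exactly one veto class and exactly one null class, and in particular $t\ge 2$. Reorder the classes so that $N_t$ is the (unique) null class, i.e.\ $m_t=0$.

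The heart of the argument is to show that removing the null class leaves the dimension unchanged. Let $v'$ be the simple game on $N'=N\backslash N_t$ determined by $\overline{n}'=(n_1,\dots,n_{t-1})$ and $\overline{m}'=(m_1,\dots,m_{t-1})$. Since $m_t=0$, a coalition $S\subseteq N$ is winning in $v$ if and only if $|S\cap N_i|\ge m_i$ for all $1\le i\le t-1$, a condition depending only on $S\cap N'$; hence $v'$ is exactly the restriction of $v$ to $N'$. Now if $v=\bigcap_j w_j$ is an intersection of weighted games, restricting each $w_j$ to $N'$ (same weights, same quota) gives weighted games whose intersection is $v'$, so $\dim(v')\le\dim(v)$. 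Conversely, any weighted representation of $v'$ extends to one of $v$ by assigning weight $0$ to each null player and keeping the quota unchanged, which alters no winning/losing verdict; hence $\dim(v)\le\dim(v')$. Therefore $\dim(v)=\dim(v')$.

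It then remains to identify the structure of $v'$ and invoke the earlier theorem. The game $v'$ has $t-1$ equivalence classes: the former class $N_1$ is still a veto class, as its threshold $m_1=n_1$ is unchanged, while each of $N_2,\dots,N_{t-1}$ still satisfies $1\le m_i\le n_i-1$ and hence remains ordinary. Thus $v'$ contains a veto player but no null player, and the preceding theorem applies with $t-1$ classes, giving $\dim(v')=\max\{(t-1)-1,\,1\}=\max\{t-2,\,1\}$. Combining with $\dim(v)=\dim(v')$ yields the claim, uniformly across the small cases $t=2$ and $t=3$ where the $\max$ takes the value $1$.

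The only genuinely delicate point is the equality $\dim(v)=\dim(v')$: I must make sure both that weighted games restrict cleanly to $N'$ and that they extend back to $N$ through the null class. The extension direction rests on the fact that null players carry weight $0$ without changing any verdict, and the restriction direction on the fact that the equivalence-class structure of $v'$ is precisely the one inherited from $v$ (no class merges or splits when $N_t$ is deleted). Everything else is formal, and the argument runs exactly parallel to the way the analogous corollary for the no-veto/with-null case reduces to the first dimension theorem.
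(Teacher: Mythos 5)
Your proof is correct and follows exactly the route the paper intends: the corollary is stated without proof precisely because it reduces to the preceding theorem by deleting the (unique) null class, which you show leaves the dimension unchanged via the weight-$0$ extension/restriction argument. Your extra care about the class structure being preserved (no merging of classes after removing nulls) is a detail the paper leaves implicit, but it is handled correctly and does not change the approach.
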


\section{Conclusion}\label{S:conclusion}

In this article, we find formulas for the number of non-isomorphic simple games with minimum and its dimension. These goals are achieved using an algebraic parameterization of these simple games in terms of a vector and a matrix satisfying some properties. To obtain the enumeration, we used generating functions and Pólya Enumeration Theorem to remove isomorphic simple games.

These combinatorial techniques could be used to extend these enumerations to wider classes of simple games. For example, trying to consider the case where $r=2$ or directly $r\geq 2$. However, we need to point out that $r$ grows very fast (see  \cite{KroSu1995} and \cite{KuTau2013} for more details\footnote{In these papers, there is a bound on the subclass of complete simple games and, instead of minimal winning vector, in terms of a variation, called shift minimal winning vectors. The number of shift minimal winning coalitions, in a complete simple game is, at the same time, less or equal than the number of minimal winning coalitions of the same game, so we can use this bound to see how the number $r$ grows (at least).}).

Many real-world situations can be modeled by simple games with minimum, but sometimes, adding some other condition that, at first sight, can look simple, we can get out of this class. To illustrate why, to make this generalization is interesting, we continue with an example.

The bicameral context where the approval of two chambers is required, is a classical example of a bipartite simple game with one minimal winning vector, where the representatives of each chamber form a class of players, and a majority is required in both chambers (for more details see e.g. Section $7$ of \cite{FrSa21DAM}).

If we focus on the United States congress, see \cite{legislative_branch}. It says ``in order to pass legislation and send it to the President for his or her signature, both the House and the Senate must pass the same bill by majority vote". However, while the House is formed by an odd number of representatives, $435$, the Senate is formed by $100$ senators and in case of draw, an untie mechanism is defined consisting in the vote of the vice-president of the United States (who is not a senator), see U.S. Constitution, Article I, Section 3, Clause 4, e.g. \cite{vice_presindent_role}.
In that case we would have a pair $(\overline{n}_1,\mathcal{M}_1)$, where $\overline{n}_1=(435,101)$, where the first class of equivalent players corresponds to the House representatives and the second class corresponds to the senators and the Vice-President, and a matrix like 

         $$
\mathcal{M}_1=\begin{pmatrix}
 218 & 51 
\end{pmatrix}
$$

since, according to the desirability relation and the definition of the parameterization, the U.S. Vice-President is in the same class of the senators.

However, again according to \cite{legislative_branch}, ``If the President vetoers a bill, they [the House and the Senate] may override his veto by passing the bill again in each chamber with at least two-thirds of each body voting in favor". Hence, if we ask for the game consisting on passing the bill and not just sending it to the President for its signature, the game can be represented by $(\overline{n}_2,\mathcal{M}_2)$:

         $$
\mathcal{M}_2=\begin{pmatrix}
1 & 218 & 51 & 0\\
1 & 218 & 50 & 1\\
0 & 290 & 67 & 0
\end{pmatrix}
$$
with $\overline{n}_2=(1,435,100,1)$, where the classes of players correspond, in order, to the President, the House representatives, the Senate representatives and the Vice-president. Note that, in the last minimal winning vector, the Vice-President does not play any role since a tie is not possible when looking for two thirds of a non-multiple of three. Thus, we have mentioned a simple game with three minimal winning vectors, abandoning the class studied in this paper.


\end{document}